\newcommand{\clevertheorem}[3]{%
  \newtheorem{#1}[thm]{#2} \crefname{#1}{#2}{#3} }
\theoremstyle{plain} \newtheorem{thm}{Theorem}[section]
\crefname{thm}{Theorem}{Theorems} \newtheorem*{thm*}{Theorem}
\newtheorem*{prop*}{Proposition} \clevertheorem{lem}{Lemma}{Lemmas}
\theoremstyle{definition}
\theoremstyle{remark} 
\providecommand\@dotsep{5} \makeatother
\author{Nathan Fieldsteel} 
\address{Department of Mathematics,
  University of Kentucky, 715 Patterson Office Tower,  Lexington, KY 40506 USA}
\email{nathan.fieldsteel@uky.edu}
\urladdr{\url{http://nathanfieldsteel.github.io}}
\author{Uwe Nagel}
\address{Department of Mathematics, University of
  Kentucky, 715 Patterson Office Tower, Lexington, KY 40506 USA} 
\email{uwe.nagel@uky.edu}
\urladdr{\url{http://www.ms.uky.edu/~uwenagel/}}
\title{Minimal and Cellular Free Resolutions over Polynomial OI-algebras}
\newcommand{\FI}{\mathrm{FI}}
\newcommand{\OI}{\mathrm{OI}} 
\newcommand{\eps}{\varepsilon}
\newcommand{\N}{\mathbb{N}}
\newcommand{\ffi}{\varphi}
\newcommand{\A}{\mathbf{A}}
\newcommand{\B}{\mathbf{B}}
\newcommand{\bF}{\mathbf{F}}
\newcommand{\Ib}{\mathbf{I}}
\newcommand{\M}{\mathbf{M}}
\newcommand{\1}{\mathbf{1}}
\DeclareMathOperator{\rank}{rank}
\DeclareMathOperator{\sgn}{sgn}
\newcommand{\Z}{\mathbb{Z}}
\begin{document}

\begin{abstract}
  Minimal free resolutions of graded modules over a noetherian
  polynomial ring have been attractive objects of interest for more
  than a hundred years. We introduce and study two natural extensions
  in the setting of graded modules over a polynomial \(\OI\)-algebra,
  namely \textit{minimal} and \textit{width-wise minimal} free
  resolutions. A minimal free resolution of an \(\OI\)-module can be
  characterized by the fact that the free module in every fixed
  homological degree, say \(i\), has minimal rank among all free
  resolutions of the module. We show that any finitely generated
  graded module over a noetherian polynomial \(\OI\)-algebra admits a
  graded minimal free resolution and that it is unique. A width-wise
  minimal free resolution is a free resolution that provides a minimal
  free resolution of a module in every width. Such a resolution is
  necessarily minimal. Its existence is not guaranteed. However, we
  show that certain monomial \(\OI\)-ideals do admit width-wise
  minimal free or, more generally, width-wise minimal flat
  resolutions. These ideals include families of well-known monomial
  ideals such as Ferrers ideals and squarefree strongly stable ideals.
  The arguments rely on the theory of cellular resolutions.

\end{abstract}

\thanks{The second author was partially supported by Simons Foundation grants \#317096 and \#636513. }

\maketitle

\tableofcontents


\section{Introduction}\label{intro}  

\(\FI\)-modules were introduced in \cite{MR3357185}. Since then their
theory has been further developed in many directions, see, e.g.,
\cite{MR3654111, MR3285226, MR3084430, MR3418745, MR3556290} . Here
\(\FI\) denotes the category of finite sets and injective functions,
and an \(\FI\)-module is a functor from \(\FI\) to \(R\)-mod where
\(R\) is a commutative ring. Central to the theory of \(\FI\)-modules
is the notion of noetherianity, which encodes the observed
stability. Some of the proofs in the theory of \(\FI\)-modules are
predicated on the introduction of \(\OI\)-modules, where \(\OI\) is
the category of totally-ordered finite sets with order-preserving
injective functions. Every \(\FI\)-module can be thought of as an
\(\OI\)-module by composition with the forgetful functor
\(\OI \rightarrow \FI\).

A separate but related area of interest involves the study of a
non-noetherian ring \(R\) equipped with an action of a group or monoid
\(G\), the prime example being a polynomial ring
\(k[x_{1}, x_{2}, \ldots, x_{n}, \ldots]\) in infinitely many
variables, equipped with an action of the infinite symmetric group.
For suitable \(G\) it is shown that every \(G\)-invariant ideal is
generated by finitely many \(G\)-orbits (see \cite{C, MR2327026,
  MR2854168}).

These two subjects were unified in \cite{1710.09247} with the
introduction of \(\FI\)-and \(\OI\)-algebras and modules over these
algebras, together with a theory of Gr\"obner bases and free
resolutions. Hilbert functions of \(\OI\)-modules were studied in
\cite{MR3666212, {MR3778142}, N-hilb}.

In this paper, we add to the theory of ideals and modules over
OI-algebras by first introducing notions of \textit{minimal} free
resolutions of modules over polynomial OI-algebras. We prove that if
\(\A\) is a noetherian graded polynomial OI-algebra and \(\M\)
is a finitely generated \(\A\)-module, then a minimal graded
free resolution of \(\M\) exists and is unique up to isomorphism, and
any finitely generated free resolution of \(\M\) contains a minimal
free resolution as a direct summand (see \Cref{thm:char min
  res}). This is directly analogous to the theory of minimal free
resolutions for graded modules over noetherian polynomial rings.

In addition, we provide a related notion of a \textit{width-wise
  minimal} free resolution, corresponding to the situation where a
free resolution of an \(\A\)-module \(\M\) in fact parametrizes a
family of minimal free resolutions of modules over different
polynomial rings. This is a much stricter condition, and it is not as
well behaved as ordinary minimality for free resolutions.

We also consider flat resolutions. If an \(\OI\)-module \(\M\) admits
a resolution by \(\OI\)-modules that gives a minimal free resolution
in each width \(\M(w)\) of \(\M\) with \(w \geqslant 0\), then such a
resolution is a resolution by flat \(\OI\)-modules (see
\Cref{prop:char widthwise minimal}). There are modules that do not
admit a width-wise minimal free resolution, but do have a width-wise
minimal flat resolution.
In particular we prove that if \(\Ib\) is an \(\OI\)-ideal
parametrizing a family of Ferrers ideals or of certain squarefree
strongly stable ideals, then the complex-of-boxes resolutions
introduced in \cite{MR2515766} can be assembled into a resolution of
\(\Ib\) by flat \(\A\)-modules, providing an example of a width-wise
minimal flat resolution (see \Cref{sss}). In some cases, this
resolution is even a width-wise minimal free resolution (see
\Cref{cor:minimal free resolution}).

The methods in this note allow one also to establish that any finitely
generated \(\FI\)-module over a noetherian polynomial \(\FI\)-algebra
admits a minimal resolution of free \(\FI\)-modules. However, we are
not aware of any non-free \(\FI\)-module that admits a width-wise
minimal resolution by flat \(\FI\)-modules. Thus, we focus on
resolutions of \(\OI\)-modules.

In \cref{prelims}, we introduce and define our objects of study and
establish some basic properties. In \cref{min}, we define minimal free
resolutions and width-wise minimal free resolutions as well as
width-wise minimal flat resolutions, and we connect their properties
to the existing literature. In particular, we characterize graded flat
modules over a polynomial \(\OI\)-algebra (see \Cref{flatfree}).

Using cellular resolutions, we give explicit constructions of
resolutions of modules over polynomial \(\OI\)-algebras in
\cref{resolutions}.


\section{Preliminaries}\label{prelims} 

We recall needed concepts, fix notation and present illustrating
examples.

\begin{defn}\label{oi}
  We use \(\OI\) to denote the category whose objects are
  totally-ordered finite sets and whose morphisms are the
  order-preserving injective functions.
\end{defn} 

For any natural number \(n \geqslant 0\), there is (up to isomorphism)
only one object in OI with \(n\) elements, namely the interval
\([n] = \left\{i \in \mathbb{Z} ~|~ 1 \leqslant i \leqslant
  n\right\}\).  By an abuse of notation, we will also use \(n\) to
refer to this object in \(\OI\).
  
The category \(\OI\) is equivalent to its skeleton, the category with
just one object \(n\) for each \(n \geqslant 0\) and morphisms being
order-preserving injective maps \(\eps\colon m \to n\). Such a
morphism will be denoted with the string of natural numbers
\(\eps(1)\eps(2)\cdots\eps(m)\). For example, the morphism
\(\eps:3 \rightarrow 8\) defined by its values \(1 \mapsto 3\),
\(2 \mapsto 5\) and \(3 \mapsto 7\) will be written as the string
\(357\). While this can lead to distinct morphisms in OI being
represented by the same string of digits, we will only use this
notation in contexts where no confusion or ambiguity can arise.

\begin{defn}\label{oimod}
  For a commutative unital ring \(k\), an \(\OI\)\textit{-module over
    \(k\)} is a covariant functor from \(\OI\) to the category of
  \(k\)-modules, and a \textit{morphism} between two \(\OI\)--modules
  is a natural transformation. If \(k\) is a graded ring, then a
  \textit{graded} OI\textit{-module over \(k\)} is a covariant functor
  from OI to the category of graded \(k\)-modules with
  degree-preserving maps. When the ring \(k\) or its grading are
  irrelevant or clear from context, we refer to these objects as
  OI\textit{-modules.}
\end{defn}

In order to define a functor \(F\) from \(\OI\) to a category \(C\),
it is enough to define it on the skeleton of \(\OI\). We use this
convention throughout the paper.

In what follows, we will use plain typeface capital letters such as
\(M\) and \(N\) for ordinary modules over a commutative ring, and
boldface capital letters such as \(\M\) and \(\mathbf{N}\) for
\(\OI\)-modules. For the evaluation of an OI-module \(\M\) at an
object \(w\) in \(\OI\), we will use the notation \(\M(w)\). In this
context, the \(\OI\) object \(w\) is called the \textit{width}, and we
refer to \(\M(w)\) as the \textit{width} \(w\) \textit{component} of
\(\M\). An \textit{element} of an \(\OI\)-module \(\M\) is an element
of \(\M(w)\) for some width \(w\). It is said to be an \textit{element
  of width} \(w\).  An \(\OI\)\textit{-submodule} of an \(\OI\)-module
\(\M\) is a subfunctor \(\mathbf{N} \subseteq \M\). A collection \(G\)
of elements of an \(\OI\)-module \(\M\) is called a \textit{generating
  set} for \(\M\) if the smallest submodule of \(\M\) that contains
all elements of \(G\) is \(\M\) itself.



\begin{example}\label{const}
  Any \(k\)-module \(M\) defines a constant functor
  \(\OI \rightarrow k\text{-mod}\), which is an
  \(\OI\)-module. The constant \(\OI\)-module defined by \(k\) itself
  will be denoted \(\mathbf{k}\).
\end{example}

The most important \(\OI\)-modules for our purposes are the
\textit{free} \(\OI\)-modules. See Definition 3.16 in
\cite{1710.09247}, and Definition 2.2 in \cite{MR3285226} for a
definition of the closely-related free \(\FI\)-modules.

\begin{example}
  \label{freerank1}
  Let \(n\) be a natural number and consider the functor
  \[
    \bF^{\OI,n}:\OI\rightarrow k\text{-mod},
  \] 
  which for any width \(w\) gives the free \(k\)-module with basis
  indexed by the \(\OI\) morphisms from \(n\) to \(w\),
  \[\bF^{\OI,n}(w) = \bigoplus\limits_{\pi \in
      \operatorname{Hom}_{\OI}(n,w)} k e_{\pi},\] and where, for
  any morphism \(\eps \in \operatorname{Hom}_{\OI}(w,\ell)\),
  the map
  \(\bF^{\OI,n}(\eps) : \bF^{\OI,n}(w) \rightarrow
  \bF^{\OI,n}(\ell)\) is defined on basis elements by composing
  the indexing morphism with \(\eps\), i.e. by
  \(e_{\pi} \mapsto e_{\eps \circ \pi}.\)
  
  The object
  \(\bF^{\OI,n}\) is called the \textit{free
  }OI\textit{-module of rank \(1\) generated in width \(n\)}. This
  terminology is justified by the observations that
  \(\bF^{\OI,n}\) is generated by the single basis
  element \(e_{\text{id}}\) in width \(n\), and that a map of
  \(\OI\)-modules \(\bF^{\OI,n} \rightarrow \M\)
  is determined by its value on this generating element. In other
  words, there is a natural isomorphism
  \[\operatorname{Nat}(\bF^{\OI,n},\M) \simeq
    \operatorname{Hom}_{k}(k,\M(n))\]
  
\end{example}

The category of \(\OI\)-modules over \(k\) is an abelian category. It
has pointwise direct sum and tensor product operations, which for any
\(\OI\)-modules \(\M\) and \(\mathbf{N}\) are defined in the natural
way by
\[(\M\oplus\mathbf{N})(w) := \M(w) \oplus
  \mathbf{N}(w)\]
and
\[(\M\otimes\mathbf{N})(w) := \M(w) \otimes_{k}
  \mathbf{N}(w).\]

\begin{defn}
  \label{free}

  Generalizing \cref{freerank1}, we say that an \(\OI\)-module is a
  \textit{free} \(\OI\)\textit{-module of rank} \(r\) if it is
  isomorphic to a direct sum
  \(\bF^{\OI,n_{1}}\oplus \ldots \oplus \bF^{\OI,n_{r}}\) for some
  natural numbers \(n_{1}, \ldots, n_{r} \geqslant 0\). That this rank
  is well-defined might not immediately be apparent, but it follows as
  a special case of \cref{oinak}.
\end{defn}

We are interested in a generalization of \(\OI\)-modules, where
instead of one underlying ring \(k\), we have a family of rings
parametrized by \(\OI\), an object called an \(\OI\)-algebra. A module
over an \(\OI\)-algebra is a collection of modules over this family of
rings, with appropriate structure maps between them. These objects
were originally introduced in \cite{1710.09247}, along with their
\(\FI\) counterparts. We recall these concepts. The reader should be
aware that our definitions differ slightly from those which originally
appeared in \cite{1710.09247}, and we will address and justify these
differences as they arise.

\begin{defn}
  An \(\OI\)\textit{-algebra over \(k\)} is a covariant functor \(\A\)
  from \(\OI\) to the category of \(k\)-algebras. An \(\OI\)-algebra
  over \(k\) is called \textit{graded}, \textit{commutative} or
  \textit{graded-commutative} if it takes values in the category of
  \(k\)-algebras with the corresponding property.
\end{defn}

\begin{rk}
  This is a a slight relaxation of the definition given in
  \cite{1710.09247}, because here we do not insist that the width
  \(0\) algebra \(\A(0)\) is \(k\) itself. Because of this omission,
  our definition is equivalent to the category-theoretic definition
  than an \(\OI\)-algebra over \(k\) is a monoid in the category of
  \(\OI\)-modules over \(k\), where \(\mathbf{k}\) is the unit
  object. This change is inconsequential for our purposes, since
  whenever \(\A\) is an \(\OI\)-algebra over \(k\) for which \(\A(0)\)
  is not \(k\) but is still commutative, we can simply think of \(\A\)
  as an \(\OI\)-algebra over \(\A(0)\), which adheres to the
  definition given in \cite{1710.09247}. All of the examples we
  consider will meet this requirement.
\end{rk}

\begin{defn}
  If \(\A\) is an \(\OI\)-algebra over \(k\), an
  \(\A\)\textit{-module} is an \(\OI\)-module \(\M\) over \(k\) with
  the property that for any object \(w\) in \(\OI\), the \(k\)-module
  \(\M(w)\) has an \(\A(w)\)-module structure, and for which these
  module structures and the structure maps
  \(\M(w) \rightarrow \M(w')\) are coherent in the sense that, for any
  morphism \(\eps : w \rightarrow w'\), the following square commutes.

  \[
    \begin{tikzcd}
      \A(w) \otimes_{k} \M(w) \ar[r] \ar[d] & \M(w) \ar[d] \\
      \A(w')\otimes_{k} \M(w') \ar[r] & \M(w')
    \end{tikzcd}
  \]
  
  Here the horizontal maps come from the the \(\A(w)\)-module and
  \(\A(w')\)-module structures on \(\M(w)\) and \(\M(w')\),
  respectively, and the vertical maps are determined by the values of
  the functors \(\A\) and \(\M\) on \(\eps.\)
\end{defn}

\begin{rk}
  If we think of \(\A\) as a monoid in the category of
  \(\OI\)-algebras, then what we have just defined is equivalent to an
  \(\A\)-module in the category-theoretic sense \cite{maclane:71},
  though since we are focused on commutative or graded-commutative
  \(\OI\)-algebras, we don't need to specify whether the \(\A\) action
  is a left or right action.
\end{rk}

\begin{example}\label{constalg}
  The constant \(\OI\)-module \(\mathbf{k}\) discussed above in
  \cref{const} is in fact an \(\OI\)-algebra over \(k\), and an
  \(\OI\)-module over \(k\) is the same object as a
  \(\mathbf{k}\)-module.
\end{example}

The most natural examples of \(\OI\)-algebras come from composing an
OI-module with a functor from the category of \(k\)-modules to the
category of \(k\)-algebras.

\begin{example}\label{symext}
  Let \(\M\) be an \(\OI\)-module over \(k\) and let \(F\) be a
  functor from the category of \(k\)-modules to the category of
  \(k\)-algebras. Then composition \(F \circ \M\) is an
  \(\OI\)-algebra over \(k\). We will typically use the more familiar
  notation \(F(\M)\) for such constructions. The most important
  examples for our purposes will come from the symmetric algebra
  \(\operatorname{Sym}_{\bullet}({-})\) and the exterior algebra
  \(\bigwedge\nolimits^{\bullet}({-})\) functors.
\end{example}

The algebra \(\operatorname{Sym}_{\bullet}(\bF^{\OI,1})\) is exactly
the \(\OI\)-algebra \(\mathbf{X}^{\OI,1}\) described in
\cite{1710.09247}. More generally, applying the symmetric algebra
functor to a finitely generated free \(\OI\)-module \(\bF\) yields an
\(\OI\)-algebra \(\A\) for which \(\A(w)\) is a polynomial ring over
\(k\) for all \(w\), with predictable variable indices and structure
maps. \(\OI\)-algebras that arise from this construction will be our
focus for the rest of this paper, and we encode them via the following
definition.

\begin{defn}
  An \(\OI\)-algebra \(\A\) is called a \textit{polynomial}
  \(\OI\)\textit{-algebra over \(k\)} if it is isomorphic to
  \(\operatorname{Sym}_{\bullet}(\bF)\), where \(\bF\) is a finitely
  generated free \(\OI\)-module over \(k\). We will call a polynomial
  \(\OI\)-algebra \textit{standard-graded} if \(\bF\) is a graded
  \(\OI\)-module over \(k\) generated in degree \(1\). In this case
  each polynomial ring \(\A(w)\) is standard-graded.
\end{defn}

\begin{example}\label{standardex}
  Let \(\bF =\bigoplus\limits_{i = 1}^{n}\bF^{\OI,0}\) be a free
  \(\OI\)-module over \(k\) generated by \(n\) elements of width
  \(0\), i.e., \(\bF\) is the constant functor defined by
  \(\bF(w) = k^{n}\) for all widths \(w\). Then the \(\OI\)-algebra
  \(\A = \operatorname{Sym}_{\bullet}(\bF)\) is the constant
  OI-algebra defined by \(\A(w) = R = k[x_{1}, \ldots, x_{n}]\) for
  all \(w\), and \(\A\)-modules are simply \(\OI\)-modules over \(R.\)
\end{example}

\begin{example}
  Let \(\bF = \bF^{\OI,1}\) be a free \(\OI\)-module over \(k\) of
  rank \(\1\) and generated in width \(1\). Then the \(\OI\)-algebra
  \(\A = \operatorname{Sym}_{\bullet}(\bF)\) is the \(\OI\)-algebra
  \(\mathbf{X}^{\OI,1}\) from \cite{1710.09247}. In other words, in
  any width \(w\) we have
  \[
  \A (w) = k[x_{1}, \ldots, x_{w}],
  \]
  and an \(\OI\)-morphism \(\eps \colon w \rightarrow w'\) induces an
  algebra map \(\A(w) \rightarrow \A(w')\) defined by
  \(x_{i} \mapsto x_{\eps(i)}\). More generally, if
  \(\bF = \bigoplus\limits_{i = 1}^{n}\bF^{\OI,1}\) is a free
  \(\OI\)-module generated by \(n\) elements of width \(1\), then the
  \(\OI\)-algebra \(\mathbf{B} = \operatorname{Sym}_{\bullet}(\bF)\)
  is the \(\OI\)-algebra where for any width \(w\), \(\mathbf{B}(w)\)
  is the coordinate ring of the space of \(n \times w\) matrices,
  i.e.,
  \[
    \mathbf{B}(w) = k[~x_{ij}~|~1 \leqslant i \leqslant n ~\text{and}~
    1 \leqslant j \leqslant w~].
  \]
  A morphism \(\eps\) in \(\OI\) acts on an element of
  \(\mathbf{B}(w)\) by application on the second index,
  \(x_{i,j} \mapsto x_{i,\eps (j)}\). This is exactly the
  \(\OI\)-algebra \((\mathbf{X}^{\OI,1})^{\otimes c}\) described in
  \cite{1710.09247}.
  
\end{example}

\begin{example}
  If \(\bF = \bF^{\OI,1} \oplus \bF^{\OI,2}\), then
  \(\mathbf{C} = \operatorname{Sym}_{\bullet}(\bF)\) is a polynomial
  \(\OI\)-algebra with one variable of width \(1\) and one variable of
  width \(2\). Explicitly, the rings \(\mathbf{C}[w]\) for the first
  few values of \(w\) are the polynomial rings

  \begin{align*}
    \mathbf{C}[0] &= k\\
    \mathbf{C}[1] &= k[x_{1}]\\
    \mathbf{C}[2] &= k[x_{1}, x_{2}, y_{12}]\\
    \mathbf{C}[3] &= k[x_{1},x_{2},x_{3}, y_{12}, y_{13}, y_{23}]\\
    \mathbf{C}[4] &= k[x_{1},x_{2},x_{3},x_{4}, y_{12},y_{13},y_{14},y_{23},y_{24},y_{34}].
  \end{align*}
  
  The indices on the variables \(y_{ij} \in B_{n}\) represent the maps
  in \(\operatorname{Hom}_{\OI}(2,n)\) as described in
  \cref{oi}. Again, in the language of \cite{1710.09247} this is the
  algebra \(\mathbf{X}^{\OI,1} \otimes \mathbf{X}^{\OI,2}\)

\end{example}

\begin{defn}\label{freeamod}
  Let \(\A\) be a commutative \(\OI\)-algebra over \(k\). An
  \(\A\)-module is called \(\textit{free}\) if it is isomorphic to
  \(\A \otimes_{k} \bF\), where \(\bF\) is a free \(\OI\)-module over
  \(\mathbf{k}\). Following \cite{1710.09247}, we will use the
  notation \(\bF^{\OI,n}_{\A}\) for the free \(\A\)-module
  \(\A\otimes_{k}\bF^{\OI,n}\).
\end{defn}

\begin{rk}
  Any rank \(r\) free \(\A\)-module \(\bF\) has the form
  \({\displaystyle \bF = \bigoplus_{i = 1}^{r} \bF_{\A}^{\OI,n_{i}}
  }\).  There is a natural choice of basis
  \(\{e_{1}, \ldots, e_{r}\}\) just as in \cref{free}, and we note
  that any \(\A\)-module morphism \(\bF \rightarrow \M\) is determined
  by the image of this basis. To be more explicit, the free
  \(\A\)-module \(\bF_{\A}^{\OI,n}\) is, in width \(w\), a free
  \(\A(w)\) module with basis indexed by
  \(\operatorname{Hom}_{\OI}(n,w)\), and an \(\OI\)-morphism
  \(\eps : w \rightarrow w'\) induces a map
  \(\bF_{\A}^{\OI,n}(w) \rightarrow \bF_{\A}^{\OI,n}(w')\) which is
  defined on basis elements by post-composition with the indexing
  morphism, and which acts on coefficients via the structure map
  \(\A(\eps)\).
 \end{rk}

  \begin{lem}\label{freeproj}
    Any free \(\OI\)-module over a commutative \(\OI\)-algebra \(\A\)
    is projective.
  \end{lem}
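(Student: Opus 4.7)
The plan is to verify that for any free $\A$-module $\bF$, the covariant Hom functor $\operatorname{Hom}_\A(\bF, -)$ is exact on the category of $\A$-modules, by reducing to the rank $1$ case and then identifying this Hom with a width-evaluation functor.

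First, I would reduce to the rank $1$ case. Since the isomorphism $\bF \cong \bigoplus_{i=1}^r \bF^{\OI,n_i}_\A$ induces a natural isomorphism
\[
  \operatorname{Hom}_\A\!\bigl(\bF, \M\bigr) \;\cong\; \prod_{i=1}^r \operatorname{Hom}_\A\!\bigl(\bF^{\OI,n_i}_\A, \M\bigr),
\]
and a product of exact functors is exact, it suffices to show that each functor $\operatorname{Hom}_\A(\bF^{\OI,n}_\A, -)$ is exact.

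Second, I would establish the natural isomorphism
\[
  \operatorname{Hom}_\A\!\bigl(\bF^{\OI,n}_\A, \M\bigr) \;\cong\; \M(n).
\]
By the remark immediately preceding the lemma, an $\A$-module map out of $\bF^{\OI,n}_\A = \A \otimes_k \bF^{\OI,n}$ is determined by the image of the basis element $e_{\mathrm{id}}$ in width $n$, and conversely any element $m \in \M(n)$ extends to an $\A$-module map by sending a basis element $e_\pi \in \bF^{\OI,n}_\A(w)$ (indexed by $\pi \in \operatorname{Hom}_\OI(n,w)$) to $\M(\pi)(m)$, then extending $\A(w)$-linearly. One must check that this extension is compatible with the $\A$-action and with the structure maps of $\M$, which follows from the coherence square in the definition of an $\A$-module together with the fact that $\A$ acts on $\bF^{\OI,n}_\A(w)$ through its own multiplication.

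Third, I would observe that the evaluation functor $\M \mapsto \M(n)$ is exact. Indeed, exactness in the abelian category of $\A$-modules is checked pointwise in each width, since kernels and cokernels of natural transformations are computed width by width; in particular, a short exact sequence of $\A$-modules restricts in width $n$ to a short exact sequence of $\A(n)$-modules, and hence of $k$-modules.

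Combining the three steps, $\operatorname{Hom}_\A(\bF, -)$ is exact, so $\bF$ is projective. I do not expect a serious obstacle; the only subtle point is bookkeeping in the second step, making sure that the natural bijection with $\M(n)$ really is an isomorphism of functors and that the assignment is $\A$-linear in each width.
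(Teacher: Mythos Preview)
Your argument is correct. It takes a somewhat different route from the paper's proof: the paper verifies the lifting property directly, choosing for each basis element \(e_i\) of \(\bF\) a preimage \(m_i \in p^{-1}(f(e_i))\) under the given surjection \(p\colon \M \to \mathbf{N}\) and defining \(\widetilde{f}(e_i) = m_i\); this is the straight analog of the proof that free modules over a ring are projective. Your approach instead packages the same universal property as a natural isomorphism \(\operatorname{Hom}_\A(\bF^{\OI,n}_\A,\M) \cong \M(n)\) and then invokes exactness of the evaluation functor. The paper's version is a line shorter and entirely elementary; your version is slightly more conceptual and makes transparent \emph{why} the lifting works, since it identifies the representable \(\bF^{\OI,n}_\A\) with width-\(n\) evaluation. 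Both rely on the same fact (maps out of a free \(\A\)-module are determined by the images of the basis), so the difference is mostly one of presentation.
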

  
  \begin{proof}
    The proof is nearly identical to the proof of the equivalent
    statement about ordinary \(R\)-modules. Let \(\bF\) be a free
    \(\A\)-module with basis \(\{e_{1}, \ldots, e_{r}\}\) whose
    elements have widths \(w_{1}, \ldots, w_{r}\), and consider the
    usual diagram
      \[
        \begin{tikzcd}
          & \bF \ar[d,"f"] \ar[dl, dotted, "\widetilde{f}"'] & \\
          \M \ar[r,"p"] & \mathbf{N} \ar[r] & 0,
        \end{tikzcd}
      \]
      where \(p\) is a surjective map of \(\A\)-modules. A lift
      \(\widetilde{f} : \bF \rightarrow \M\) is constructed by
      choosing, for each \(e_{i}\), any preimage
      \(m_{i} \in p^{-1}(f(e_{i})) \subseteq \M(w_{i})\) and defining
      \(\widetilde{f}(e_{i}) = m_{i}\).
  \end{proof}
  
\begin{defn}
  If \(\A\) is a commutative OI-algebra over \(k\), an \textit{ideal
    in \(\A\)} is a \(\A\)-submodule
  \(\Ib \colon \OI \rightarrow k\text{-mod}\) of \(\A\).  Thus, for
  each width \(w\), \(\Ib(w)\) is an ideal in \(\A(w)\), and for each
  \(\OI\)-morphism \(\eps \colon w \rightarrow w'\), the map
  \(\Ib(\eps)\) is the restriction of the ring map \(\A(\eps)\) to
  \(\Ib(w).\)
\end{defn}
  
\begin{example}
  Let \(\A\) be the OI-algebra
  \(\operatorname{Sym}_{\bullet}(\bF^{\OI,1})\), and let
  \(\Ib\) be the ideal in \(\A\) generated by \(x_{2}\)
  and \(x_{3}\) in \(\A(3)\), i.e., \(\Ib\) is the
  smallest \(\A\)-submodule of \(\A\) that contains
  these two width \(3\) elements. Then \(\Ib(w)\) is the zero
  ideal in \(\A(w)\) for \(w = 0,1,2\), While for any
  \(w \geqslant 3\) we have that \(\Ib(w)\) is the ideal
  generated by all of the variables of \(\A(w)\) except for
  \(x_{1}\), i.e.
  \(\Ib(w) = (x_{2}, \ldots , x_{w}) \subseteq \A(w).\)
\end{example}
  
We will primarily focus on polynomial \(\OI\)-algebras with variables
of width \(0\) and \(1\) only, i.e. algebras of the form
\(\operatorname{Sym}_{\bullet}(\bF)\) where \(\bF\) is a finitely
generated free \(\OI\)-module generated in widths \(0\) and \(1\). The
reason for this restriction is that such polynomial \(\OI\)-algebras
\(\A\) are precisely the \textit{noetherian} algebras (see
\cite[Proposition 4.8]{1710.09247}, in the sense that any ideal of
\(\A\) is finitely generated. If \(\A\) has just one variable in width
greater than \(1\), then this is no longer true.


\section{Minimality for OI Resolutions}\label{min}

We begin to develop a theory of minimal free resolutions for
\(\A\)-modules analogous to the theory of minimal free resolutions for
graded modules over a polynomial ring.

Recall that such minimal free resolutions can be characterized by the
minimality of the ranks of the free modules or, equivalently, by the
minimality of the maps appearing in a resolutions. One of our results
shows that both concepts of a minimality can be extended and lead to
equivalent descriptions of graded minimal free resolutions over a
polynomial \(\OI\)-algebra.  We also consider width-wise minimal and
flat resolutions.

Throughout this section we assume that \(\A\) is a noetherian
commutative OI-algebra over a field \(k\). If the category of
\(\A\)-modules is noetherian, then any finitely generated
\(\A\)-module \(\M\) admits a resolution by finitely generated free
\(\A\)-modules (see \cite[Theorem 7.1]{1710.09247}). In fact, since
\(\M\) is finitely generated, there exists a surjective map from a
finitely generated free \(\A\)-module \(\bF_{0}\) onto \(\M\). By
assumption, the kernel of this map is finitely generated . Thus, one
can iterate the construction and obtains a free resolution of \(\M\),
\[
  \cdots \xrightarrow{\varphi_{3}} \bF_{2} \xrightarrow{\varphi_{2}}
  \bF_{1} \xrightarrow{\varphi_{1}} \bF_{0} \xrightarrow{\varphi_{0}}
  \M \rightarrow 0
\]
where each module \(\bF_{d}\) is free and finitely
generated. Moreover, if \(\A\) and \(\M\) are graded, there is a graded
free resolution of \(\M\), that is, all maps \(\ffi_d\) are
degree-preserving.

In this section, we prove a family of results analogous to classical
results on minimal free resolutions for finitely generated graded
modules over polynomial rings.  Recall that in the classical setting,
a homogeneous map \(\varphi: F \rightarrow G\) between graded free
modules over a polynomial ring \(R\) is called \textit{minimal} if it
can be represented as a matrix having no unit entries. We adapt this
definition of minimality to maps between free \(\A\)-modules.

\begin{defn}\label{mindef}
  Let \(\A\) be a graded polynomial \(\OI\)-algebra, and let \(\bF\)
  and \(\mathbf{G}\) be finitely generated graded free \(\A\)-modules
  with bases \(\{f_{1}, \ldots, f_{s}\}\) and
  \(\{g_{1}, \ldots, g_{t}\}\), where each \(f_{i}\) has width
  \(u_{i}\) and each \(g_{j}\) has width \(v_{j}\). A (graded) map
  \(\varphi : \bF \rightarrow \mathbf{G}\) is determined by its value
  on the elements of the basis of \(\bF\), i.e., it is determined by
  \(s\) expressions of the form
  \[
    \varphi(f_{i}) = \sum\limits_{j = 1}^{t} \left(
      \sum\limits_{\eps_{i, j} \in \operatorname{Hom}(v_{j}, u_{i})}
      a_{\eps_{i, j} } \cdot \eps_{i, j} (g_{j})\right),
      \]
      where each \(a_{\eps_{i, j}}\) is a (homogenous) element of
      \(\A(u_{i})\). We say that \(\varphi\) is \textit{minimal} if
      whenever any map \(\eps_{i, j}\) is an identity morphism in
      \(\OI\), the coefficient \(a_{\eps_{i, j}}\) is not a unit. A
      graded complex of finitely generated free \(\A\)-modules is
      called \textit{minimal} if all of the maps in the complex are
      minimal.
\end{defn}

We will prove in \Cref{thm:char min res} that this definition of
minimality is equivalent to the definition one might expect, i.e.,
that a graded free resolution
\[
  \cdots \xrightarrow{\varphi_{3}} \bF_{2} \xrightarrow{\varphi_{2}}
  \bF_{1} \xrightarrow{\varphi_{1}} \bF_{0} \rightarrow
  \M \rightarrow 0
  \]
  of an \(\A\)-module \(\M\) is minimal if and only if for any other
  graded free resolution \(\mathbf{G}_{\bullet}\) of \(\M\) and for
  any homological degree \(d\), the rank of \(\bF_{d}\) is less than
  or equal to the rank of \(\mathbf{G}_{d}\). Furthermore, in analogy
  with the classical setting, any finitely generated graded
  \(\A\)-module admits a minimal free resolution, and any two minimal
  free resolutions of \(\M\) are isomorphic as chain complexes.

Observe that any graded free resolution \(\bF_{\bullet}\) of
\(\M\), 
\[
\cdots \xrightarrow{\varphi_{3}} \bF_{2}
  \xrightarrow{\varphi_{2}} \bF_{1} \xrightarrow{\varphi_{1}}
  \bF_{0} \xrightarrow{} \M \rightarrow 0 
  \]
  gives, for each width \(w\), a graded free resolution
  \(\bF_{\bullet} (w)\) of \(\M(w)\) over \(\A(w)\),
\[
  \cdots \rightarrow \bF_{2}(w) \rightarrow \bF_{1}(w) \rightarrow
  \bF_{0}(w) \rightarrow \M(w) \rightarrow 0.
  \]
  If \(\bF_{\bullet}\) is minimal one may wish that each resolution
  \(\bF_{\bullet} (w)\) is also minimal, but in general this is not
  the case. Requiring this property is a stronger, but slightly less well behaved
  condition on free resolutions, which we define here.

\begin{defn}\label{wwmindef}
  With the same setup as in \cref{mindef}, a map
  \(\varphi: \bF \rightarrow \mathbf{G}\) is called \textit{width-wise
    minimal} if none of the coefficients \(a_{\eps_{i, j}}\) is a
  unit, and a complex \(\bF_{\bullet}\) of free \(\A\)-modules is
  called width-wise minimal if all of the maps in the complex are
  width-wise minimal.
\end{defn}

It is clear from the definition that every width-wise minimal free
resolution is also a minimal free resolution. However, the converse is
not true in general. We illustrate this fact and the above concepts
with an example that is closely related to the Koszul complex defined
in \cite{1710.09247}.

\begin{example}\label{notwwmin}
  Let \(\A\) be the polynomial OI-algebra
  \(\operatorname{Sym}_{\bullet}(\bF^{\OI,1})\), and let
  \(\M \subseteq \bF^{\OI,1}_{\A}\) be the submodule which is
  generated by the element \(e_{2}\) in width \(2\), i.e.,
  \(\M(0) = \M(1) = 0,\) and, for \(w \geqslant 2\),
  \[
    \M(w) \subseteq \bF^{\OI,1}(w) ~\text{is generated by}~ \{e_2,
    e_3,\ldots, e_w\}.
  \]
  Since \(\M\) has a single generator in width \(2\), any minimal free
  resolution of \(\M\) has to start with a surjective map from
  \(\ffi_0 \colon\bF^{\OI,2} \rightarrow \M\) defined by sending the
  generator \(e_{12}\) of \(\bF^{\OI,2}\) to the generator \(e_{2}\)
  of \(\M\). In width \(2\), this map is injective. In width \(3\),
  this map is determined by
  \(e_{12} \mapsto e_2, \ e_{13} \mapsto e_3\) and
  \(e_{23} \mapsto e_3\). The kernel of the surjection
  \((\bF^{\OI,2}(3) \rightarrow \M (3)\) is generated by the element
  \(e_{13} - e_{23}\). In fact, one verifies, that the kernel of
  \(\ffi_0\) is the submodule of \(\bF^{\OI,2}\) that is generated by
  \(e_{13} - e_{23}\) in width \(3\).  Hence, using the map
  \(\ffi_1 \colon \bF^{\OI,3} \to \bF^{\OI,2}\) which sends the
  generator \(e_{123}\) to this kernel generator, we get the beginning
  of a free resolution
  \[
    \bF^{\OI,3} \xrightarrow{\varphi_{1}} \bF^{\OI,2}
    \xrightarrow{\varphi_{0}} \M \rightarrow 0.
  \]
  In the notation of \cref{mindef}, the map \(\ffi_1\) is determined
  by the expression
  \[
    \varphi_{1}(e_{123}) = e_{13} - e_{23},
  \]
  where the width \(3\) elements \(e_{13}\) and \(e_{23}\) are the
  images of the generator \(e_{12}\) of \(\bF^{\OI,2}\) under two
  different \(\OI\)-morphisms from \(2\) to \(3\). Both of their
  coefficients are units, but since none of the indexing morphisms is
  an identity morphism, \(\ffi_1\) is a minimal map. However, it is
  not width-wise minimal.
  
  Since \(\M\) is not free and \(\bF^{\OI,3}\) and \(\bF^{\OI,2}\)
  both have rank one, it makes sense to say that the constructed
  sequence is the beginning of a minimal free resolution of
  \(\M\). Notice though that this sequence does not give the beginning
  of a width-wise minimal free resolution because, for
  \(w \geqslant 3\), the module \(\M(w) \neq 0\) is a free
  \(\A(w)\)-module.
\end{example}

For our study of minimal resolutions, we first prove a relationship
between minimality and trivial complexes which is a direct analog of
the corresponding result for graded modules over a polynomial ring,
namely that the only way a complex can fail to be minimal is if it has
a trivial complex as a direct summand.

\begin{defn}
  A complex of free \(\A\)-modules is called \textit{trivial} if it is
  isomorphic to a direct sum of complexes of the form
  \[
    \cdots \rightarrow 0 \rightarrow 0 \rightarrow \bF
    \xrightarrow{\text{id}} \bF \rightarrow 0 \rightarrow 0
    \rightarrow \cdots,
  \] 
  where \(\bF\) is a finitely generated free \(\A\)-module.
\end{defn}

\begin{lem}\label{trivsummand}
  A graded complex \(\bF_{\bullet}\) of finitely generated free
  \(\A\)-modules is minimal if and only if it does not have a trivial
  complex as a direct summand.
\end{lem}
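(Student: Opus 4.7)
The plan is to adapt the classical proof over a polynomial ring, exhibiting an identity component as a split trivial summand whenever a basis presentation of some differential has a unit identity coefficient. For the easy direction, if $\bF_\bullet \cong \bF_\bullet' \oplus \bF_\bullet''$ with $\bF_\bullet''$ trivial, choose bases adapted to the splitting: the identity isomorphism between the two copies of some $\bF_\A^{\OI,n}$ in the trivial summand is expressed, on its distinguished width-$n$ generator, by sending $f \mapsto g$ with coefficient $1 \in k^\times$ for the identity morphism $\mathrm{id}_n$. Thus $\varphi_d$ fails to be minimal.

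For the converse, suppose some $\varphi_d$ is not minimal: there exist basis elements $f_i \in \bF_d$ of width $n$ and $g_j \in \bF_{d-1}$ of the same width $n$ whose identity coefficient $a$ lies in $k^\times$. Write
\[
\varphi_d(f_i) = a\, g_j + r,
\]
where $r$ is a sum of $\A$-multiples of $\eps(g_\ell)$ with $(\ell,\eps) \ne (j,\mathrm{id}_n)$. The argument proceeds in two steps. First, replace $g_j$ in the basis of $\bF_{d-1}$ by $g_j' := \varphi_d(f_i)$; the resulting transition in width $n$ is an elementary modification of the identity with diagonal entry $a \in k^\times$, so it is invertible, and because $\bF_{d-1}$ is free this basis change propagates functorially to every larger width to give an $\OI$-module automorphism of $\bF_{d-1}$. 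In the new basis, $\varphi_d(f_i) = g_j'$. Second, for each $k \ne i$, extract the coefficients $b_{k,\eps} \in \A(u_k)$ of every $\eps(g_j')$ (with $\eps \colon n \to u_k$) appearing in $\varphi_d(f_k)$, and replace $f_k$ by
\[
f_k' := f_k - \sum_{\eps \colon n \to u_k} b_{k,\eps}\, \eps(f_i).
\]
Applying $\varphi_d$ and using $\varphi_d(f_i) = g_j'$ shows that $\varphi_d(f_k')$ contains no $\eps(g_j')$ terms.

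In the new bases, $\bF_d = \bF_d' \oplus \langle f_i \rangle$ and $\bF_{d-1} = \bF_{d-1}' \oplus \langle g_j' \rangle$, where the primed summands are generated by the remaining basis elements; the restriction of $\varphi_d$ to the second summands is an isomorphism $\bF_\A^{\OI,n} \xrightarrow{\sim} \bF_\A^{\OI,n}$, while $\varphi_d$ sends $\bF_d'$ into $\bF_{d-1}'$. Compatibility with the adjacent differentials follows from the complex identities: $\varphi_{d-1}\varphi_d = 0$ yields $\varphi_{d-1}(g_j') = 0$, and $\varphi_d \varphi_{d+1} = 0$ together with the $\A$-linear independence of $\{\eps(g_j')\}_\eps$ in every width forces $\varphi_{d+1}$ to factor through $\bF_d'$. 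Hence $\bF_\bullet$ splits off the trivial subcomplex $\cdots \to 0 \to \bF_\A^{\OI,n} \xrightarrow{\mathrm{id}} \bF_\A^{\OI,n} \to 0 \to \cdots$, as required.

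The main technical point—and the only real departure from the classical argument—is verifying that the two width-$n$ basis changes genuinely extend to $\OI$-module automorphisms of $\bF_{d-1}$ and $\bF_d$ in all widths, rather than to mere isomorphisms in width $n$. This follows because each modification is an elementary transformation whose determinant in width $n$ is a unit, while its action on generators in larger widths is forced by functoriality and so inherits invertibility. With this observation in hand, the classical proof carries over essentially verbatim.
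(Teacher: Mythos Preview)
Your proof is correct and follows essentially the same approach as the paper: both perform the change of basis $g_j \mapsto g_j' = \varphi_d(f_i)$ on the target and the unipotent change $f_k \mapsto f_k - \sum_\eps b_{k,\eps}\,\eps(f_i)$ on the source to split off a rank-one identity summand. Your treatment is in fact slightly more careful than the paper's: you explicitly verify compatibility with the adjacent differentials $\varphi_{d-1}$ and $\varphi_{d+1}$ (the paper simply asserts that it suffices to treat a two-term complex), and you flag why the width-$n$ basis changes extend to $\OI$-module automorphisms, which the paper leaves implicit.
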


\begin{proof}
  It suffices to prove the result for a two-term complex
  \(\varphi \colon \bF \rightarrow \mathbf{G}\). With notation as in
  \cref{mindef}, suppose that \(\varphi\) is determined by
  \[
    \varphi(f_{i}) = \sum\limits_{j = 1}^{t} \left(
      \sum\limits_{\eps_{i, j} \in \operatorname{Hom}(v_{j}, u_{i})}
      a_{\eps_{i, j} } \cdot \eps_{i, j} (g_{j})\right),
  \]
  and assume without loss of generality that
  \(u_{1} = v_{1}\) and that \(a = a_{\eps_{1, 1} }\) is a unit in
  \(\A(u_{1})\). Since the identity map is the only element of $\operatorname{Hom}(v_{1}, u_{1})$ we get $\eps_{1, 1} = \text{id}$, and thus  
  \[
    \ffi(f_1) = a g_1 + \sum\limits_{j = 2}^{t} \left(
      \sum\limits_{\eps_{i, j} \in \operatorname{Hom}(v_{j}, u_{i})}
      a_{\eps_{i, j} } \cdot \eps_{i, j} (g_{j})\right). 
  \]
  Hence the set $\{g_1',\ldots,g_t'\}$ defined by   
  \[
    g_{1}' = \varphi(f_{1}) \quad \text{ and } \quad g_{j}' = g_{j} \;
    \text{ if } 2 \leqslant j \leqslant t
  \]
  is also a basis of \(\mathbf{G}\) . Similarly, we get another basis
  \(\{f_1',\ldots,f_s'\}\) of \(\bF\) by setting
  \[
    f_{1}' = f_{1} \quad \text{ and }  \quad  f_{i}' = f_{i} - a^{-1} \cdot \left( \sum\limits_{\eps_{i, 1} \in \operatorname{Hom}(v_{1}, u_{i})}
      a_{\eps_{i, 1} } \cdot \eps_{i, 1} (f_1)\right) \; \text{ if } 2 \leqslant i \leqslant s.  
  \]
  Since \(\ffi\) is a natural transformation, we have that
  \(\ffi (\eps_{i, 1} (f_1)) = \eps_{i, 1} (\ffi (f_1))\). If follows
  for \(i=2,\ldots,s\) that \(\varphi(f_{i}')\) has no summands
  involving \(\eps(g_{1})\) for any \(\OI\)-morphism \(\eps\). Since
  \(\varphi(f_{1}') = g_{1}'\), this shows that \(\ffi\) is a direct sum
  of two maps, at least one of which is trivial.
  
  Conversely, it is clear by definition that if \(\varphi\) has a
  trivial summand, then it is not minimal.
\end{proof}

Next, we prove a version of Nakayama's lemma for modules over
polynomial \(\OI\)-algebras, which states that for a finitely
generated \(\A\)-module, there is a well-behaved notion of a minimal
generating set.

\begin{lem}
  \label{oinak}
  Let \(\M \not = 0\) be a finitely generated graded \(\A\)-module. A
  subset \(S \subseteq \M\) of homogeneous elements is called a
  \emph{minimal generating set} of \(\M\) if \(S\) is a generating set
  for \(\M\) and no proper subset of \(S\) is a generating set for
  \(\M\). Let \(G = \{g_{1}, \ldots, g_{n}\}\) and
  \(H = \{h_{1}, \ldots, h_{m}\}\) be minimal generating sets for
  \(\M\). Then \(n = m\) and (up to a permutation), \(g_{i}\) and
  \(h_{i}\) have the same width and same degree for each \(i\) with
  \(1 \leqslant i \leqslant n\).
\end{lem}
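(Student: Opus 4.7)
The plan is to prove the uniqueness statement by a two-step reduction: first, pass from $\M$ to a quotient on which the $\A$-action is trivial, reducing the problem to a question about graded OI-modules over $k$; second, analyze minimal generating sets of such modules by induction on width.

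Let $\A_+$ be the OI-ideal with $\A_+(w) = (\A(w))_+$, the irrelevant maximal ideal of the graded polynomial ring $\A(w)$, and set $\mathbf{N} := \M / \A_+ \M$. Since the quotient OI-algebra $\A/\A_+$ is the constant OI-algebra $\mathbf{k}$, the module $\mathbf{N}$ becomes a finitely generated graded OI-module over $k$. I will show that a homogeneous subset $G \subseteq \M$ is a (minimal) generating set of $\M$ if and only if its image $\bar G \subseteq \mathbf{N}$ is a (minimal) generating set of $\mathbf{N}$. The forward implication is immediate. For the converse, if $\bar G$ generates $\mathbf{N}$ then $\langle G\rangle(w) + \A_+(w)\M(w) = \M(w)$ in each width $w$; since $\M$ is finitely generated over $\A$, each $\M(w)$ is a finitely generated graded $\A(w)$-module, so the graded Nakayama lemma applied over the polynomial ring $\A(w)$ yields $\langle G\rangle(w) = \M(w)$. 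The minimality equivalence then follows formally: the bijection $G \leftrightarrow \bar G$ preserves redundancy of any single element.

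The question therefore reduces to showing that for a finitely generated graded OI-module $\mathbf{N}$ over $k$, the widths and degrees appearing in any minimal generating set are uniquely determined invariants of $\mathbf{N}$. For each width $w$ I introduce the graded $k$-vector space
\[
\mathbf{N}^{\mathrm{new}}(w) := \mathbf{N}(w) \Big/ \sum_{w' < w,\ \eps \in \operatorname{Hom}_{\OI}(w',w)} \mathbf{N}(\eps)(\mathbf{N}(w')),
\]
which depends only on $\mathbf{N}$. I claim, and prove by induction on $w$ starting at the smallest width where $\mathbf{N}$ is nonzero, that a homogeneous subset $H \subseteq \mathbf{N}$ is a minimal generating set exactly when, for every width $w$ and every degree $d$, the width-$w$ degree-$d$ elements of $H$ project to a $k$-basis of $\mathbf{N}^{\mathrm{new}}(w)_d$. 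In the inductive step, the denominator in the definition of $\mathbf{N}^{\mathrm{new}}(w)$ is precisely the submodule of $\mathbf{N}(w)$ generated under OI-morphisms by the lower-width elements of $H$, so the width-$w$ elements of $H$ must project to a spanning set, and any nontrivial linear dependence among them modulo this denominator would let one rewrite a redundant element using the inductive hypothesis.

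Since $\dim_k \mathbf{N}^{\mathrm{new}}(w)_d$ is an invariant of $\mathbf{N}$, the number of width-$w$ degree-$d$ elements in any minimal generating set of $\M$ is uniquely determined, yielding both $n = m$ and the required matching of widths and degrees after a permutation. The main technical obstacle is the inductive characterization in terms of $\mathbf{N}^{\mathrm{new}}$: verifying carefully, degree by degree, that a linear dependence modulo the lower-width contributions genuinely translates into redundancy of an element in the OI-module sense, so that minimality is equivalent to the projected elements forming a basis.
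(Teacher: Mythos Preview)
Your proof is correct and takes a somewhat different route from the paper's. The paper argues directly by induction on the size $n$ of a minimal generating set: at each step it identifies the smallest width $w$ with $\M(w) \neq 0$, observes that the width-$w$ elements of $G$ and of $H$ are both minimal generating sets of the $\A(w)$-module $\M(w)$ (so classical graded Nakayama gives them the same size and degrees), then quotients $\M$ by the submodule generated by $\M(w)$ and applies the inductive hypothesis to the remaining generators. Your approach instead performs the Nakayama reduction globally once, passing to $\mathbf{N} = \M/\A_+\M$, and then extracts the explicit width-graded invariant $\dim_k \mathbf{N}^{\mathrm{new}}(w)_d$ that counts generators. What you gain is a clean intrinsic formula for the number of generators of each width and degree; what the paper gains is brevity, since it never needs to name $\mathbf{N}^{\mathrm{new}}$ or prove the biconditional characterization.

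Two small points worth tightening. First, the phrase ``the bijection $G \leftrightarrow \bar G$'' presupposes that distinct elements of a minimal $G$ have distinct images in $\mathbf{N}$; this is true, but you should say why (if $\bar g_1 = \bar g_2$ then $G \setminus \{g_1\}$ still surjects onto $\mathbf{N}$, hence generates $\M$ by your Nakayama step, contradicting minimality). Second, your induction on $w$ is really only needed for the implication ``projections form bases $\Rightarrow$ $H$ generates''; for the other direction, the fact that the denominator in $\mathbf{N}^{\mathrm{new}}(w)$ equals the span of images of lower-width elements of $H$ follows immediately from $H$ being a generating set (OI-morphisms do not decrease width), without any inductive hypothesis.
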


\begin{proof}
  We proceed by induction on \(n\). If \(n = 1\), let \(w\) be the
  width of \(g_{1}\). Then \(\M(w)\) is minimally generated as
  an \(\A(w)\)-module by \(g_{1}\), and \(\M\) is
  generated by \(\M(w)\) as an \(\A\)-module. Since
  \(w\) is the smallest width for which \(\M(w)\) is non-zero,
  there must be at least one \(h_{i}\) of width \(w\). Let
  \(H' \subseteq H\) be the generators of width \(w\) and let
  \(H'' \subseteq H\) be the complement of \(H'\). Now \(H'\) is a
  minimal generating set for \(\M(w)\) as an
  \(\A(w)\)-module, and it follows that \(H'\) is a singleton
  set \(H = \{h_{1}\}\), and that the degree of \(h_{1}\) is the
  degree of \(g_{1}\). Since \(\M(w)\) generates
  \(\M\) as an \(\A\)-module and \(H\) is a minimal
  generating set, it follows that \(H''\) is empty.

  Next we consider the case where \(n > 1\). Let \(w\) be the smallest
  natural number for which \(\M(w)\) is nonzero. Then let
  \(G' \subseteq G\) and \(H' \subseteq H\) be the (necessarily
  nonempty) subsets consisting of the width \(w\) generators, and let
  \(G''\) and \(H''\) be their complements. Now \(G'\) and \(H'\) are
  both minimal generating sets for \(\M(w)\) as an
  \(\A(w)\)-module, so \(|G'| = |H'|\) and up to a permutation
  the elements of \(G'\) and \(H'\) have the same degrees. If we let
  \(\overline{\M}\) be the quotient of \(\M\) by the
  \(\A\)-submodule generated by \(\M(w)\), and let
  \(\overline{G''}\) and \(\overline{H''}\) be the images of \(G''\)
  and \(H''\) in \(\overline{M}\). Then \(\overline{G''}\) and
  \(\overline{H''}\) are minimal generating sets for
  \(\overline{\M}\). By induction, \(\overline{G''}\) and
  \(\overline{H''}\) have the same number of elements, and up to
  permutation these elements have the same widths and degrees. This
  lets us draw the same conclusion about \(G''\) and \(H''\), which
  completes the proof.
\end{proof}

\begin{rk}
  In the case that \(\bF\) is a finitely finitely generated generated
  free \(\A\)-module, the rank of \(\bF\) (see \cref{free}) is also
  the size of a minimal generating set for \(\bF\).
\end{rk}

Next we prove that a free resolution \(\bF_{\bullet}\) of \(\M\) is
minimal if and only if, when compared to any other graded free
resolution of \(\M\), it has the smallest number of generators in each
homological degree.

\begin{lem}\label{minequiv}
  Let \(\M\) be a finitely generated graded \(\A\)-module, and let
  \(\bF_{\bullet}\) be a graded resolution of \(\A\) by finitely
  generated free \(\A\)-modules. The following conditions are
  equivalent:

  \begin{enumerate}
  \item [(i)] \(\bF_{\bullet}\) is a graded minimal free resolution of
    \(\M\).
    
  \item [(ii)] If \(\mathbf{G}_{\bullet}\) is any other graded free
    resolution of \(\M\), then \(\mathbf{G}_{\bullet}\) contains
    \(\bF_{\bullet}\) as a direct summand.
 
  \item [(iii)] If \(\mathbf{G}_{\bullet}\) is any other graded free
    resolution of \(\M\), then for any homological degree \(d\), the
    rank of \(\bF_{d}\) is less than or equal to the rank of
    \(\mathbf{G}_{d}\)
  \end{enumerate}
\end{lem}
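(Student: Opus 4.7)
My plan is to establish the cycle (ii) $\Rightarrow$ (iii) $\Rightarrow$ (i) $\Rightarrow$ (ii). The implication (ii) $\Rightarrow$ (iii) is immediate from additivity of rank under direct sums, which is well-defined by \cref{oinak}: a splitting $\mathbf{G}_\bullet = \bF_\bullet \oplus \mathbf{H}_\bullet$ forces $\rank \mathbf{G}_d \geqslant \rank \bF_d$ in every homological degree $d$. For (iii) $\Rightarrow$ (i) I argue by contrapositive: if $\bF_\bullet$ is not minimal then \cref{trivsummand} gives a decomposition $\bF_\bullet = \bF'_\bullet \oplus \mathbf{T}_\bullet$ with $\mathbf{T}_\bullet$ a nonzero trivial complex; since trivial complexes are exact, $\bF'_\bullet$ is again a graded free resolution of $\M$ with strictly smaller rank in at least one degree, contradicting (iii) upon taking $\mathbf{G}_\bullet = \bF'_\bullet$.

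The main direction is (i) $\Rightarrow$ (ii), which I handle in two stages. Given an arbitrary graded free resolution $\mathbf{G}_\bullet$ of $\M$, I iteratively apply \cref{trivsummand} to split off trivial summands, producing a decomposition $\mathbf{G}_\bullet = \mathbf{G}'_\bullet \oplus \mathbf{T}_\bullet$ in which $\mathbf{G}'_\bullet$ is itself a graded minimal free resolution of $\M$. Termination in each fixed homological degree $d$ is guaranteed by the finite generation of $\mathbf{G}_d$, and the resulting global decomposition is well-defined because each individual $\mathbf{G}_d$ is modified only finitely many times. It then suffices to prove that any two graded minimal free resolutions of $\M$ are isomorphic as chain complexes, since identifying $\bF_\bullet$ with $\mathbf{G}'_\bullet$ realizes $\bF_\bullet$ as a direct summand of $\mathbf{G}_\bullet$ as required. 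To establish this isomorphism, I use projectivity of free $\A$-modules (\cref{freeproj}) to lift the identity on $\M$ to chain maps $\alpha \colon \bF_\bullet \to \mathbf{G}'_\bullet$ and $\beta \colon \mathbf{G}'_\bullet \to \bF_\bullet$, and then show that $\beta \circ \alpha$ is a chain isomorphism (and similarly $\alpha \circ \beta$), so that $\alpha$ and $\beta$ are mutually inverse.

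The main obstacle will be this last step: the OI-analogue of the classical result that any lift of the identity along a graded minimal free resolution is automatically an automorphism. I plan to prove it by induction on homological degree $d$. At each inductive step the key claim is that a graded $\A$-linear endomorphism $\psi$ of the finitely generated free $\A$-module $\bF_d$, arising as a chain lift of the identity on $\M$ between two minimal resolutions, must send each basis element $f_i$ of width $u_i$ to $f_i$ modulo combinations of OI-images $\eps(f_j)$ whose identity-morphism coefficients lie in the irrelevant ideal of $\A(u_i)$. Combining the uniqueness of minimal generating sets from \cref{oinak} with a width-wise application of the classical graded Nakayama lemma to each polynomial ring $\A(w)$ should then force $\psi$ to be simultaneously surjective and injective. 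The delicate point, absent in the classical setting, is the bookkeeping involving non-identity OI-morphism coefficients in $\psi$; handling these will require that $\psi$ is a natural transformation and hence commutes with all OI-structure maps.
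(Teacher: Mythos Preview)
Your proposal is correct but takes a genuinely different route from the paper for the hard direction (i) $\Rightarrow$ (ii). The paper constructs the splitting directly: it lifts the augmentation $\mathbf{G}_0 \to \M$ through $\bF_0 \to \M$ using projectivity of $\mathbf{G}_0$, then uses minimality of $\varphi_1 \colon \bF_1 \to \bF_0$ to show this lift is surjective (if some basis element of $\bF_0$ were not in the image, one could write down a kernel element exhibiting a unit identity-morphism coefficient in $\varphi_1$), then builds a section and inducts on homological degree. Your approach instead first strips trivial summands from $\mathbf{G}_\bullet$ to produce a minimal $\mathbf{G}'_\bullet$, and then proves uniqueness of minimal resolutions by lifting the identity both ways and showing that the composite endomorphism $\psi$ of $\bF_\bullet$ is an isomorphism.

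Your key claim is correct and your sketch of why is on the right track: since $\psi - \mathrm{id}$ is nullhomotopic, $\psi_d(f_i) - f_i$ lies in $\mathrm{im}(\varphi_{d+1}) + h_{d-1}(\mathrm{im}(\varphi_d))$, and minimality of $\varphi_d, \varphi_{d+1}$ together with the fact that in $\OI$ a composition $\eps' \circ \eps$ is an identity only if both factors are forces every identity-morphism coefficient into $\mathfrak{m}$. Modulo $\mathfrak{m}$ in each width, the matrix of $\psi_d$ is then block upper-unitriangular when the basis is ordered by generator width (since the only $\OI$-morphism $n \to n$ is the identity), and the classical graded Nakayama gives the isomorphism width-wise. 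The paper's direct construction is somewhat cleaner and does not require $\mathbf{G}_\bullet$ to have finitely generated terms, which your iterative splitting step does need; on the other hand, your route has the pleasant side effect of proving uniqueness of minimal resolutions as an intrinsic part of the argument rather than deriving it afterward, as the paper does in \Cref{thm:char min res}(ii).
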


\begin{proof}

  To prove (i) \(\Rightarrow\) (ii), we construct a split surjective
  map of complexes \(\mathbf{G}_{\bullet} \rightarrow \bF_{\bullet}\)
  by induction on homological degree. In degree \(0\), the desired map
  \( \xi_0 \colon \mathbf{G}_{0} \rightarrow \bF_{0}\) exists because
  \(\mathbf{G}_{0}\) is free and hence projective by
  \cref{freeproj}. So the surjective map
  \(\varphi_{0} : \mathbf{G}_{0} \rightarrow \M\) lifts to a map
  \(\xi_{0}\), giving the following commutative diagram:
  \[
    \begin{tikzcd}
      \mathbf{G}_{0} \ar[r,"\varphi_{0}"] \ar[d,"\xi_{0}"] & \M\ar[d,"\text{id}_M"] \\
      \bF_{0} \ar[r,"\psi_{0}"'] & \M
    \end{tikzcd}
  \]
  
  To show that \(\xi_{0}\) is surjective, let \(f_{1}, \ldots, f_{n}\)
  be a homogeneous basis for \(\bF_{0}\), let
  \(m_{1}, \ldots, m_{n} \) be the images of these basis elements
  under \(\psi_{0}\), and consider the subset of
  $\{f_{1}, \ldots, f_{n}\}$ that is contained in the image of the map
  \(\xi_{0}\). This subset generates a submodule
  \(\bF' \subseteq \bF_{0}\).  If \(\bF'\) is a proper submodule of
  \(\bF_{0}\), this would imply that \(\M\) is generated by a proper
  subset of \(\{m_{1}, \ldots, m_{n}\}\).  Without loss of generality,
  this means we can express \(m_{1}\) in terms of
  \(\{m_{2}, \ldots, m_{n}\}\) as
  \[
    m_{1} = \sum\limits_{i=2}^{n} \left(\sum\limits_{\eps_i \in \operatorname{Hom}(u_{i}, u_{1})}a_{\eps_i} \eps_i (m_{i})\right), 
  \]
  where \(u_{i}\) is the width of \(m_{i}\) and each
  \(a_{\eps_i}\) is an element of \(\A(u_{i})\). Hence, the element 
  \[
    f_{1} - \sum\limits_{i=2}^{n} \left(\sum\limits_{\eps_i \in \operatorname{Hom}(u_{i}, u_{1})}a_{\eps_i} \eps_i (f_{i})\right)
  \]
  is in the kernel of \(\psi_{0}\) and so, by exactness, it is in the
  image of \(\psi_{1} \colon \bF_1 \to \bF_0\).  Since \(f_1\) is in
  the basis
  \(\{\pi (f_i) \; \mid \; 1 \le i \le n, \ \pi \in
  \operatorname{Hom}(u_{i}, u_{1}) \}\) of \(\bF_0 (u_1)\), it follows
  that \(\psi_{1}\) is not minimal.  This contradiction shows that
  \(\xi_{0}\) is surjective.  As \(\bF_{0}\) is free and therefore
  projective, \(\xi_{0}\) has a splitting
  \(s_{0} \colon \bF_{0} \rightarrow \mathbf{G}_{0}\) which embeds
  \(\bF_{0}\) as a direct summand of \(\mathbf{G}_{0}\).
  
  Now, suppose we have already built a split surjective map of chain
  complexes up to homological degree \(d-1\), i.e., we have surjective
  maps \(\xi_{i} : \mathbf{G}_{i} \rightarrow \bF_{i}\) for
  \(0 \leqslant i \leqslant d-1\) which form a map of truncated chain
  complexes, and each \(\xi_{i}\) has a section \(s_{i}\) which embeds
  \(\bF_{i}\) as a direct summand of \(\mathbf{G}_{i}\), and that
  these sections \(s_{i}\) also form a map of truncated chain
  complexes.
  
\[
\begin{tikzcd}
  \mathbf{G}_d \arrow[r, "\varphi_d"] & \mathbf{G}_{d-1} \arrow[r, "\varphi_{d-1}"] \arrow[d, "\xi_{d-1}"]       & \mathbf{G}_{d-2} \arrow[r] \arrow[d, "\xi_{d-2}"]          & \cdots \arrow[r] & \mathbf{G}_1 \arrow[r, "\varphi_1"] \arrow[d, "\xi_1"]       & \mathbf{G}_0 \arrow[r, "\varphi_0"] \arrow[d, "\xi_0"]       & \M \arrow[d, equal] \\
  \bF_d \arrow[r, "\psi_d"] & \bF_{d-1} \arrow[r, "\psi_{d-1}"]
  \arrow[u, "s_{d-1}", bend left] & \bF_{d-2} \arrow[r] \arrow[u,
  "s_{d-2}", bend left] & \cdots \arrow[r] & \bF_1 \arrow[r, "\psi_1"]
  \arrow[u, "s_1", bend left] & \bF_0 \arrow[r, "\psi_0"] \arrow[u,
  "s_0", bend left] & \M
\end{tikzcd}
\]

The image of the composition \(\xi_{d-1} \circ \varphi_{d}\) is
contained in the kernel of \(\psi_{d-1}\) and hence in the image of
\(\psi_{d}\). So by projectivity of \(\mathbf{G}_{d}\), we can lift
this composition to a map
\(\xi_{d} : \mathbf{G}_{d} \rightarrow \bF_{d}\). Using the minimality
of the map \(\psi_{d+1}\), an argument similar to the one given above
shows that $\xi_{d}$ is surjective. Repeating this argument, one gets
a surjective map of complexes
\(\xi_{\bullet} \colon \mathbf{G}_{\bullet} \rightarrow
\bF_{\bullet}\).

It remains to construct a splitting
\(s_{d} \colon \bF_{d} \rightarrow \mathbf{G}_{d}\) of \(\xi_{d+1}\)
with the property that
\(s_{d-1} \circ \psi_{d} = \varphi_{d} \circ s_{d}\). To this end,
consider the commutative diagram

\[
  \begin{tikzcd}
    \mathbf{G}_{d+1} \arrow[d, "\xi_{d+1}"] \arrow[r, "\varphi_{d+1}"] & \mathbf{G}_d \arrow[d, "\xi_{d}"] \arrow[r, "\varphi_{d}"]             & \mathbf{G}_{d-1} \arrow[d, "\xi_{d-1}"] \arrow[r, "\varphi_{d-1}"]       & \cdots \\
    \bF_{d+1} \arrow[r, "\psi_{d+1}"] & \bF_d \arrow[r, "\psi_{d}"]
    \arrow[u, "s_d", dotted, bend left] & \bF_{d-1} \arrow[u,
    "s_{d-1}", bend left] \arrow[r, "\psi_{d-1}"] & \cdots,
  \end{tikzcd}
\]
and let \(\{f_{1}, \ldots, f_{n}\}\) be a basis for \(\bF_d\).  For
each \(f_{i}\), the element \(s_{d-1} (\psi_{d}(f_{i})) \) is in the
kernel of
\(\varphi_{d-1}\). 
Let \(g_{i} \in \mathbf{G}_{d}\) be a preimage of
\(s_{d-1} (\psi_{d}(f_{i}))\) under \(\varphi_{d}\). Note that
\(\xi_{d}(g_{i}) - f_{i}\) is in the kernel of \(\psi_{d}\). Hence,
there is an element \(h_{i} \in \mathbf{G}_{d+1}\) with
\(\psi_{d+1} (\xi_{d+1} (h_i)) = \xi_{d}(g_{i}) - f_{i}\).  Define
\(s_{d}\) by sending \(f_{i}\) to \(g_{i} - \varphi_{d+1}(h_{i})\).
One verifies that \(s_{d}\) is a section of \(\xi_{d}\) and that
\(s_{d-1} \circ \psi_{d} = \varphi_{d} \circ s_{d}\).

The implication (ii) \(\Rightarrow\) (iii) follows because the rank of a free direct summand \(\bF\) of a free module \(\mathbf{G}\) is at most the rank of \(\mathbf{G}\). 

To show (iii) \(\Rightarrow\) (i), assume that \(\bF_{\bullet}\) is
not a minimal free resolution. Then by \cref{trivsummand} we can
decompose \(\bF_{\bullet}\) as
\(\mathbf{T}_{\bullet} \oplus \bF'_{\bullet}\), where \(\mathbf{T}\)
is a trivial complex and \(\bF'_{\bullet}\) is a free resolution of
\(\M\). In any homological degree \(d\) for which \(\mathbf{T}_{d}\)
is nonzero, the rank of \(\bF'_{d}\) is smaller than the rank of
\(\bF_{d}\), and so \(\bF_{\bullet}\) cannot be a direct summand of
$\bF'_{\bullet}$.  This contradiction completes the proof.
\end{proof}

\begin{lem}
\label{detbyrank}
A free \(\A\)-module \(\bF\) is determined up to isomorphism by the
set \(\{ \rank \bF(w) ~|~ w \in \N\}\), where \(\bF(w)\) is considered
as an \(\A(w)\)-module.
\end{lem}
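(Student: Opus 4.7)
The plan is to translate isomorphism classes of free $\A$-modules into multisets of generator widths, compute the rank function in terms of that multiset, and then invert this relationship by a triangular induction.

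First I would use the decomposition $\bF \simeq \bigoplus_{i=1}^r \bF_{\A}^{\OI, n_i}$ given by \cref{freeamod}, and count ranks explicitly. By the description of $\bF_{\A}^{\OI,n}(w)$ as a free $\A(w)$-module with basis indexed by $\operatorname{Hom}_{\OI}(n,w)$, and since an order-preserving injection $[n]\hookrightarrow[w]$ is determined by its image, there are exactly $\binom{w}{n}$ such morphisms. Hence $\rank_{\A(w)} \bF_{\A}^{\OI,n}(w) = \binom{w}{n}$. Setting $c_n := |\{i \mid n_i = n\}|$ and using additivity of rank under direct sums, this gives
\[
\rank_{\A(w)} \bF(w) \;=\; \sum_{n \geqslant 0} c_n \binom{w}{n}.
\]

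Next I would observe that this is a triangular linear system in the unknowns $c_n$. Since $\binom{w}{n} = 0$ for $n > w$ and $\binom{w}{w} = 1$, the multiplicities can be recovered recursively from the rank sequence by
\[
c_0 = \rank \bF(0), \qquad c_w = \rank \bF(w) - \sum_{n=0}^{w-1} c_n \binom{w}{n} \text{ for } w \geqslant 1.
\]
Thus the sequence $\{\rank \bF(w)\}_{w \in \N}$ uniquely determines the multiset $\{n_1, \ldots, n_r\}$, and therefore $\bF \simeq \bigoplus_i \bF_{\A}^{\OI, n_i}$ up to isomorphism. This is consistent with the invariant provided by \cref{oinak}, where a minimal generating set of $\bF$ has widths exactly $\{n_1, \ldots, n_r\}$.

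I do not expect a significant obstacle: the heart of the argument is simply counting order-preserving injections, and the inversion is elementary triangular algebra over $\Z$. The only point requiring care is verifying that $\rank_{\A(w)} \bF(w)$ behaves additively over the summands $\bF_{\A}^{\OI, n_i}$ and reduces to the count of $\OI$-morphisms, both of which are immediate from the explicit description of free $\A$-modules following \cref{freeamod}.
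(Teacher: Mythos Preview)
Your proposal is correct and follows essentially the same approach as the paper: both compute $\rank \bF(w) = \sum_n c_n \binom{w}{n}$ from the explicit basis of $\bF_{\A}^{\OI,n}(w)$ and then argue that the multiplicities $c_n$ are uniquely determined. The only cosmetic difference is in the last step: the paper observes that $\binom{w}{t}$ is a polynomial in $w$ of degree $t$ and compares polynomial coefficients, whereas you evaluate at $w=0,1,2,\ldots$ and invert the resulting triangular system; these are equivalent ways of expressing the linear independence of the binomial functions.
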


\begin{proof}
  Since \(\bF\) and \(\mathbf{G}\) are free, we may assume
  \({\displaystyle \bF = \bigoplus_{i = 1}^{r} \bF_{\A}^{\OI,m_{i}}
  }\) and
  \({\displaystyle \mathbf{G} = \bigoplus_{j = 1}^{s}
    \bF_{\A}^{\OI,n_{j}}}\). The assumption gives for every integer
  \(w \geqslant 0\),
\[
  \rank \bF(w) = \sum_{i=1}^r \binom{w}{m_i} = \sum_{j=1}^s
  \binom{w}{n_j} = \rank \mathbf{G}(w).
\]   
Observe, for any integer \(t \geqslant 0\), that \(\binom{w}{t}\) is a
polynomial in \(w\) of degree \(t\). Thus, the claim follows by
comparing coefficients of the polynomials
\(\sum_{i=1}^r \binom{w}{m_i}\) and \(\sum_{j=1}^s \binom{w}{n_j}\).
\end{proof}

We collect the preceding results into the following theorem, which
draws an analogy between minimal free resolutions for modules over
noetherian polynomial rings and minimal free resolutions for modules
over \(\OI\)-algebras.

\begin{thm}\label{thm:char min res}
  Let \(\A\) be a polynomial \(\OI\)-algebra, and let \(\M\) be a
  finitely generated graded \(\A\)-module. Then one has:
  
  \begin{enumerate}
  \item [(i)] Any graded resolution of \(\M\) by finitely generated
    free \(\A\)-modules contains a minimal free resolution as a direct
    summand.
  \item [(ii)] Any two graded minimal free resolutions of \(\M\) are
    isomorphic.
  \item [(iii)] If \(\A\) is noetherian, then a graded minimal free
    resolution of \(\M\) exists.
  \end{enumerate}
  
\end{thm}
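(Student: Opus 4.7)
The plan is to establish (iii) first by an iterative construction and then deduce (i) and (ii) from the preceding lemmas. For (iii), I would build the resolution degree by degree, choosing minimal generating sets at each step. By \cref{oinak} together with the noetherianity assumption, one can inductively pick a minimal generating set of \(\M\), then of \(\ker \varphi_{0}\), then of \(\ker \varphi_{1}\), and so on, obtaining at each stage a surjection \(\varphi_{d} \colon \bF_{d} \to \ker \varphi_{d-1}\) from a finitely generated graded free \(\A\)-module \(\bF_{d}\). The crux is to verify that the resulting differentials are minimal in the sense of \cref{mindef}: suppose, with the notation of that definition, that some \(\varphi_{d}(f_{i}) = a\, g_{j} + (\text{other terms})\) has a unit coefficient \(a\) on an identity \(\OI\)-morphism, so that \(f_{i}\) and \(g_{j}\) share a common width \(v\). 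Since \(\operatorname{Hom}_{\OI}(v, v) = \{\mathrm{id}\}\), the other terms can only involve the remaining basis elements \(g_{l}\) with \(l \ne j\) (possibly via nontrivial \(\OI\)-translates), and applying \(\varphi_{d-1}\) together with the chain-complex identity \(\varphi_{d-1} \circ \varphi_{d} = 0\) would express \(\varphi_{d-1}(g_{j})\) as an \(\A\)-linear combination of \(\OI\)-translates of the \(\varphi_{d-1}(g_{l})\) for \(l \ne j\). This contradicts the minimality of the chosen generating set of \(\ker \varphi_{d-2}\) (or of \(\M\) when \(d=1\)).

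For (i), let \(\mathbf{G}_{\bullet}\) be any graded free resolution of \(\M\). By (iii) there exists a graded minimal free resolution \(\bF_{\bullet}\), and then the implication (i)\(\Rightarrow\)(ii) of \cref{minequiv} exhibits \(\bF_{\bullet}\) as a direct summand of \(\mathbf{G}_{\bullet}\). For (ii), suppose \(\bF_{\bullet}\) and \(\bF'_{\bullet}\) are both graded minimal free resolutions of \(\M\). Applying \cref{minequiv} with the roles of the two resolutions reversed produces in each homological degree the inequalities \(\rank \bF_{d} \le \rank \bF'_{d}\) and \(\rank \bF'_{d} \le \rank \bF_{d}\), so the ranks agree; by \cref{detbyrank} it follows that \(\bF_{d}\) and \(\bF'_{d}\) have the same rank in each width. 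The split surjection \(\bF'_{\bullet} \to \bF_{\bullet}\) supplied by \cref{minequiv} therefore restricts in each width \(w\) to a surjective map between finitely generated free \(\A(w)\)-modules of the same finite rank, which must be an isomorphism. Hence the chain map is a width-wise isomorphism, and so an isomorphism of complexes.

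The main obstacle I anticipate is the verification in (iii) that choosing minimal generating sets at each step yields a complex whose differentials are minimal in the sense of \cref{mindef}. Unlike the classical polynomial-ring setting, where minimality of a resolution follows immediately from Nakayama applied to minimal generators of the successive kernels, here one must leverage both the chain-complex relation and the rigidity of \(\OI\), namely that every width has only the identity as a self-morphism, in order to convert a hypothetical unit coefficient on an identity \(\OI\)-morphism into a genuine redundancy in the minimal generating set one homological degree down.
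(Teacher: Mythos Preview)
Your approach inverts the paper's logical order: you prove (iii) first by an explicit construction and then derive (i) from (iii), whereas the paper proves (i) directly and obtains (iii) from (i). Your constructive argument for (iii) is correct, and the verification that the differentials are minimal is sound---it is in fact more informative than the paper's treatment, which simply takes an arbitrary finitely generated free resolution (guaranteed by noetherianity) and extracts a minimal summand via (i).

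This inversion, however, creates a genuine gap in your proof of (i). Statement (i) carries no noetherianity hypothesis, yet you invoke (iii), which does. For non-noetherian \(\A\) you have not produced the minimal \(\bF_{\bullet}\) needed to feed into \cref{minequiv}. The paper avoids this by working directly with the given \(\mathbf{G}_{\bullet}\): if \(\mathbf{G}_{\bullet}\) is not minimal, the argument behind \cref{minequiv} (via \cref{trivsummand}) splits off a trivial summand; since each \(\mathbf{G}_{d}\) has finite rank this stabilizes in every homological degree, and the remaining summand is the desired minimal resolution. You could alternatively repair your route by observing that the existence of \(\mathbf{G}_{\bullet}\) alone forces each kernel in your iterative construction to be finitely generated---a Schanuel-type argument using \cref{freeproj}---so that the construction in (iii) runs without noetherianity; but this must be stated.

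There is also a slip in (ii). From \cref{minequiv} you correctly obtain \(\rank \bF_{d} = \rank \bF'_{d}\), but then claim \cref{detbyrank} gives equality of ranks in each width. That is not what \cref{detbyrank} asserts: equal \(\OI\)-rank does not imply equal width-wise ranks (compare \(\bF^{\OI,1}_{\A}\) with \(\bF^{\OI,2}_{\A}\)). The correct step, essentially the paper's, is to use \cref{minequiv} (i)\(\Rightarrow\)(ii) in both directions, so that \(\bF_{d}\) and \(\bF'_{d}\) are direct summands of one another; in each width they are then mutual summands of finitely generated free \(\A(w)\)-modules, forcing equal width-wise ranks. After that, your concluding width-wise surjectivity argument (or \cref{detbyrank}) applies.
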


\begin{proof}
  Statement (i) is a consequence of \cref{minequiv}. For statement
  (ii), \cref{minequiv} implies that for two minimal free resolutions
  of \(\M\), each contains the other as a direct summand. Together
  with \cref{detbyrank} this means those resolutions are
  isomorphic. For statement (iii), noetherianity implies the existence
  of a resolution of \(\M\) by finitely generated free \(\A\)-modules
  (see \cite[Theorem 6.15]{1710.09247}), and applying statement (i)
  gives that any such resolution has a minimal free resolution as a
  direct summand.
\end{proof}

For explicit examples of graded minimal free resolutions we refer to
\Cref{cor:minimal free resolution} in the next section.

Although any graded module \(\M\) over a noetherian polynomial
\(\OI\)-algebra \(\A\) admits a graded minimal free resolution, such a
resolution is not necessarily width-wise minimal (see
\Cref{notwwmin}). However, as the following example shows, \(\M\) may
admit a graded complex of not necessarily free modules that in each
width \(w\) gives a minimal free resolution of \(\M(w)\) over
\(\A(w)\).

\begin{example}\label{wwkoszul}
  Let \(\A\) be the polynomial OI-algebra
  \(\operatorname{Sym}_{\bullet}(\bF^{\OI,1})\), and let \(\Ib\) be
  the ideal generated by \(x_{1} \in \A(2)\).

  As in \cref{notwwmin}, one can show that \(\Ib\) cannot have a
  width-wise minimal free resolution. However, as a special case of
  \cref{sss}, \(\Ib\) is resolved by graded exact chain complex
  \(\mathbf{B}_{\bullet}\) where in each width \(w\),
  \(\mathbf{B}_{\bullet}(w)\) is a minimal free resolution of
  \(\Ib (w)\).  We sketch its construction here. In each width \(w\),
  \(\Ib(w)\) is the ideal in \(\A(w) = k[x_{1}, \ldots, x_{w}]\)
  generated by the variables \(x_{1}, \ldots, x_{w-1}\), and so
  \(\Ib (w)\) is minimally resolved by a standard Koszul complex on
  these variables. Consider this Koszul complex as the graded
  components of the exterior algebra on a free \(\A(w)\)-module with
  basis \(\{e_{1}, e_{2}, \ldots, e_{w-1}\}\) and with the standard
  Koszul differentials. So \(\mathbf{B}_{i}(w)\) is the degree \(i\)
  component of this exterior algebra. These width-wise Koszul
  resolutions form a resolution \(\mathbf{B}_{\bullet}\) of \(\Ib\) by
  \(\A\)-modules, where an \(\OI\)-morphism \(\eps : w \rightarrow w'\)
  induces a map of exterior algebras
  \(\mathbf{B}_{i}(w) \rightarrow \mathbf{B}_{i}(w')\) by acting on
  indices \(e_{i} \mapsto e_{\eps(i)}\).

  The \(\mathbf{A}\)-module \(\mathbf{B}_{1}\) in this resolution is
  generated by the element \(e_{1}\) in width \(2\). But the rank of
  this module in any width \(w > 1\) is \(w - 1\). So it is not a free
  \(\A\)-module.
\end{example}

In order to extend the observation in the above example and to guide
the search for complexes that provide width-wise minimal free
resolutions we introduce flat \(\OI\)-modules.
  
\begin{defn}
  An \(\A\)-module \(\mathbf{Q}\) is called \textit{\(\A\)-flat} (or
  simply \textit{flat} when there is no ambiguity) if the functor
  \({-}\otimes_{\A}\mathbf{Q}\) is an exact functor from the category
  of \(\A\)-modules to itself.
\end{defn}

We now explain how this concept relates to width-wise minimal free
resolutions.

\begin{prop}
  \label{flatfree}
  Let \(\mathbf{Q}\) be a finitely generated graded module over a
  polynomial \(\OI\)-algebra \(\A\). Then \(\mathbf{Q}\) is
  \(\A\)-flat if and only if, for every width \(w\), \(\mathbf{Q}(w)\)
  is a free \(\A(w)\)-module.
\end{prop}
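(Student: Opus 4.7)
The plan is to reduce both directions of the equivalence to a widthwise flatness statement, using two facts: the tensor product of \(\A\)-modules is computed widthwise as \((\M \otimes_{\A} \mathbf{N})(w) = \M(w) \otimes_{\A(w)} \mathbf{N}(w)\) (since it is a coequalizer and colimits in the category of \(\OI\)-modules are computed widthwise), and exactness of a sequence of \(\A\)-modules is equivalent to exactness at every width. Combined with the classical fact that a finitely generated graded module over the standard-graded polynomial ring \(\A(w)\) is flat if and only if it is free, this reduces the proposition to transferring flatness between \(\mathbf{Q}\) over \(\A\) and \(\mathbf{Q}(w)\) over \(\A(w)\).

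For the backward direction, suppose each \(\mathbf{Q}(w)\) is a free, hence flat, \(\A(w)\)-module. Given any short exact sequence of \(\A\)-modules, tensoring with \(\mathbf{Q}\) over \(\A\) yields, at each width \(w\), the sequence obtained by tensoring with \(\mathbf{Q}(w)\) over \(\A(w)\), which is exact by flatness of \(\mathbf{Q}(w)\). Widthwise exactness then gives exactness of \(-\otimes_{\A}\mathbf{Q}\), so \(\mathbf{Q}\) is \(\A\)-flat.

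For the forward direction, I first note that \(\mathbf{Q}(w)\) is a finitely generated graded \(\A(w)\)-module: if \(\mathbf{Q}\) is generated by elements of widths \(w_1, \ldots, w_r\), then the images of these generators under the (finitely many) \(\OI\)-morphisms \(w_i \to w\) span \(\mathbf{Q}(w)\) over \(\A(w)\). It therefore suffices to prove \(\mathbf{Q}(w)\) is \(\A(w)\)-flat. To lift an arbitrary \(\A(w)\)-module injection \(\iota \colon M' \hookrightarrow M\) to an \(\A\)-module injection, I would associate to any \(\A(w)\)-module \(M\) an \(\A\)-module
\[
  (\operatorname{Ind}_w M)(v) \;:=\; \bigoplus_{\pi \in \operatorname{Hom}_{\OI}(w, v)} \A(v) \otimes_{\A(w)} M,
\]
with the evident structure maps twisting by \(\A(\eps)\) on coefficients and by postcomposition on indices. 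This construction is left adjoint to evaluation at \(w\) and satisfies \((\operatorname{Ind}_w M)(w) = M\) because \(\operatorname{Hom}_{\OI}(w, w) = \{\mathrm{id}\}\). Each ring map \(\A(\pi)\colon \A(w) \to \A(v)\) is an inclusion of polynomial rings into a polynomial ring with more variables, hence flat, so \(\operatorname{Ind}_w\) is exact. Applying it to \(\iota\), tensoring the resulting \(\A\)-module injection with the flat module \(\mathbf{Q}\), and taking width-\(w\) components recovers \(M' \otimes_{\A(w)} \mathbf{Q}(w) \hookrightarrow M \otimes_{\A(w)} \mathbf{Q}(w)\). Thus \(\mathbf{Q}(w)\) is \(\A(w)\)-flat, and being finitely generated and graded over the polynomial ring \(\A(w)\), it is free.

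The main technical hurdle is establishing exactness of the induced-module construction \(\operatorname{Ind}_w\), which rests on each structure map \(\A(\pi)\) being flat. For polynomial \(\OI\)-algebras this is automatic, since \(\A(\pi)\) is a polynomial-ring inclusion, and this is where the polynomial hypothesis is used in an essential way. A minor bookkeeping point is checking that the construction respects the grading, which is routine because the structure maps, the induction, and the tensor products are all degree-preserving.
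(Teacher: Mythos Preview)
Your argument is correct, but it differs from the paper's proof in two places, and the comparison is instructive. For the forward direction, the paper lifts an exact sequence of \(\A(w)\)-modules to \(\A\)-modules simply by \emph{extension by zero}: declare the sequence in width \(w\) and the zero sequence in every other width. This is an exact sequence of \(\A\)-modules with no hypothesis needed on the structure maps, so your induction functor \(\operatorname{Ind}_w\) and the verification that each \(\A(\pi)\) is flat are unnecessary machinery here. On the other hand, once \(\mathbf{Q}(w)\) is known to be flat, the paper invokes Cartan--Eilenberg (finitely generated flat \(\Rightarrow\) projective) and then the Quillen--Suslin theorem to conclude freeness, whereas you use the graded hypothesis and the graded Nakayama lemma to get freeness directly. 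Your route at this step is more elementary and better exploits the standing graded assumption; the paper's route would also cover the ungraded case. The backward direction is essentially identical in both arguments.
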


\begin{proof}
  Suppose \(\mathbf{Q}\) is \(\A\)-flat and consider
  \(\mathbf{Q}(w)\). Any sequence of \(\A(w)\)-modules
  \[
    0 \rightarrow N'' \rightarrow N \rightarrow N' \rightarrow 0
  \]
  can be lifted to an exact sequence of \(\A\)-modules, which is this
  sequence in width \(w\) and is the zero sequence in all other
  widths. Since \({-}_{\A}\otimes \mathbf{Q}\) is exact, it follows
  that \({-}\otimes_{\A(w)}\mathbf{Q}(w)\) is exact. Hence,
  \(\mathbf{Q}(w)\) is a finitely generated flat module over a
  polynomial ring \(\A(w)\), and so it is a projective
  \(\A(w)\)-module (see \cite{MR1731415}).  By the Quillen-Suslin
  Theorem \cite{Q, Suslin}, it follows that \(\mathbf{Q}(w)\) is a
  free \(\A(w)\)-module.
  
  Conversely, if \(\mathbf{Q}(w)\) is a free \(\A(w)\)-module for each
  width \(w\), then the functor \({-}\otimes_{\A}\mathbf{Q}\) is exact
  in each width, and so it is exact. Hence \(\mathbf{Q}\) is
  \(\A\)-flat.
\end{proof}

While free \(\A\)-modules are also \(\A\)-flat, the converse is of
course not true. Indeed, by the preceding result, the module \(\M \)
considered in \cref{notwwmin} is flat, but not free.

We now extend the notion of width-wise minimal resolutions to include
resolutions by flat modules.

\begin{defn}
  For an \(\A\)-module \(\M\), a \emph{width-wise minimal flat
    resolution of \(\M\)} is an exact sequence
\[ 
  \mathbf{Q}_{\bullet} \colon \cdots \rightarrow \mathbf{Q}_{3}
  \rightarrow \mathbf{Q}_{2} \rightarrow \mathbf{Q}_{1} \rightarrow
  \mathbf{Q}_{0} \rightarrow \M \rightarrow 0,
\]
where each \(\mathbf{Q}_{i}\) is a flat \(\A\)-module and, 
  in each width \(w\),  the complex \(\mathbf{Q}_{\bullet}(w)\) is a
  minimal free resolution of \(\M(w)\) as an
  \(\A(w)\)-module.
\end{defn}

\begin{example}
  In \cref{notwwmin}, we exhibited an \(\A\)-module \(\M\), which does
  not admit a width-wise minimal free resolution. However, we observed
  above that \(\M\) is actually a flat \(\A\)-module. Hence, it has a
  (trivial) width-wise minimal flat resolution of the form
  \[
    0 \rightarrow \mathbf{Q}_{0} \rightarrow \M \rightarrow 0,
  \]
  where \(\mathbf{Q}_{0} = \M\) and the map above is an
  isomorphism. We present non-trivial examples of width-wise minimal
  flat resolutions in the next section.
\end{example} 

As in the classical situation, width-wise minimality admits an alternate characterization. 

\begin{lem}
  Let \(\M\) be a finitely generated graded \(\A\)-module over a field
  \(k\), and let \(\mathbf{k}\) be the constant \(\OI\)-algebra
  determined by \(k\). Consider a resolution of \(\M\) by finitely
  generated flat graded \(\A\)-modules
  \[
    \mathbf{Q}_{\bullet} \colon \cdots \rightarrow \mathbf{Q}_{2}
    \xrightarrow{\partial_{2}} \mathbf{Q}_{1}
    \xrightarrow{\partial_{1}} \mathbf{Q}_{0}.
  \]
  The resolution \(\mathbf{Q}_{\bullet}\) is width-wise minimal if and
  only if in the complex
  \(\mathbf{Q}_{\bullet}\otimes_{\A}\mathbf{k}\), the maps
  \(\partial_{i}\otimes_{\A}\mathbf{k}\) are all zero.
\end{lem}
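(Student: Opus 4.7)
The plan is to reduce the statement, width by width, to the classical criterion for minimality of graded free resolutions over a polynomial ring.

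First, I would invoke \Cref{flatfree} to observe that for each width $w$ and each $i$, the $\A(w)$-module $\mathbf{Q}_i(w)$ is a finitely generated graded free module. Hence, for every $w$, the complex $\mathbf{Q}_\bullet(w)$ is a graded free resolution of $\M(w)$ over the graded polynomial ring $\A(w)$. By the definition of width-wise minimality, $\mathbf{Q}_\bullet$ is width-wise minimal if and only if each of these individual resolutions is a graded minimal free resolution in the classical sense.

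Next, I would unpack $\mathbf{Q}_\bullet \otimes_\A \mathbf{k}$. Since tensor products of $\OI$-modules are formed pointwise and $\mathbf{k}(w) = k$ acquires its $\A(w)$-module structure through the augmentation $\A(w) \twoheadrightarrow k$ that sends every variable of $\A$ to zero, one has
\[
(\mathbf{Q}_\bullet \otimes_\A \mathbf{k})(w) \;=\; \mathbf{Q}_\bullet(w) \otimes_{\A(w)} k \quad \text{and} \quad (\partial_i \otimes_\A \mathbf{k})(w) \;=\; \partial_i(w) \otimes_{\A(w)} k.
\]
In particular, the vanishing of $\partial_i \otimes_\A \mathbf{k}$ for all $i$ is equivalent to the vanishing of $\partial_i(w) \otimes_{\A(w)} k$ for all $i$ and every width $w$.

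Finally, I would invoke the classical fact that for a complex $F_\bullet$ of finitely generated graded free modules over a graded polynomial ring $R$ whose variables have positive degree and residue field $k = R/\mathfrak{m}$, the differentials of $F_\bullet$ are represented by matrices with no unit entries if and only if all differentials of $F_\bullet \otimes_R k$ vanish. Applying this criterion to each $\mathbf{Q}_\bullet(w)$ and combining with the two previous observations yields the desired equivalence. The main obstacle here is essentially bookkeeping: checking that width-wise minimality genuinely unpacks, via \Cref{flatfree}, to classical minimality of a free resolution at each width, and that tensoring with the $\OI$-algebra $\mathbf{k}$ corresponds pointwise to killing the positive-degree part of $\A(w)$. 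Once those identifications are recorded, the classical criterion closes the argument immediately.
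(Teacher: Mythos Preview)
Your proposal is correct and follows essentially the same approach as the paper: reduce to each width, identify $(\mathbf{Q}_\bullet \otimes_\A \mathbf{k})(w)$ with $\mathbf{Q}_\bullet(w) \otimes_{\A(w)} k$, and invoke the classical criterion (the paper cites \cite[Lemma 19.4]{MR1322960}). The only difference is that you invoke \Cref{flatfree} explicitly, whereas the paper appeals directly to the definition of width-wise minimal flat resolution, which already stipulates that each $\mathbf{Q}_\bullet(w)$ is a minimal free resolution.
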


\begin{proof}
  This follows immediately from our definition of width-wise
  minimal. Indeed, the complex \(\mathbf{Q}_{\bullet}\) is width-wise
  minimal if and only if, for any width \(w\), the maps in the complex
  \[
  \cdots \rightarrow \mathbf{Q}_{2}(w)\otimes_{\A(w)}k
    \xrightarrow{\partial_{2}(w)\otimes_{\A(w)}k}
    \mathbf{Q}_{1}(w)\otimes_{\A(w)}k
    \xrightarrow{\partial_{1}\otimes_{\A(w)}k}
    \mathbf{Q}_{0}(w)\otimes_{\A(w)}k
    \] 
  are all zero (see, e.g., \cite[Lemma 19.4]{MR1322960}). These are precisely the width-wise maps
  in the complex
  \(\mathbf{Q}_{\bullet}\otimes_{\A}\mathbf{k}.\)
\end{proof}

The following result summarizes the above discussion and motives the search for flat resolutions. 

\begin{prop}
   \label{prop:char widthwise minimal}
Consider a graded exact sequence of finitely generated modules over a polynomial $\OI$-algebra \(\A\), 
\[ 
  \mathbf{Q}_{\bullet} \colon \cdots \rightarrow \mathbf{Q}_{3}
    \rightarrow \mathbf{Q}_{2} \rightarrow \mathbf{Q}_{1} \rightarrow
    \mathbf{Q}_{0} \rightarrow \M \rightarrow 0.  
\]
Then the following conditions are equivalent: 
\begin{itemize}

\item[(i)] For every width $w$, the restricted complex
\[ 
  \mathbf{Q}_{\bullet} (w) \colon \cdots \rightarrow \mathbf{Q}_{3} (w)
    \rightarrow \mathbf{Q}_{2}  (w) \rightarrow \mathbf{Q}_{1} (w) \rightarrow
    \mathbf{Q}_{0}(w)  \rightarrow \M (w) \rightarrow 0, 
\]
is a minimal free resolution of the  \(\A(w)\)-module $\M (w)$. 

\item[(ii)] $ \mathbf{Q}_{\bullet}$ is a width-wise minimal flat resolution of $\M$. 

\end{itemize}

\end{prop}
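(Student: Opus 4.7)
The plan is to observe that this proposition is essentially a reformulation of the definition of a width-wise minimal flat resolution, with the only non-tautological content being the passage between ``flat $\A$-module'' and ``free $\A(w)$-module for every $w$,'' which is exactly the content of \Cref{flatfree}.

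First I would establish (ii) $\Rightarrow$ (i), which follows directly from the definition of a width-wise minimal flat resolution: by definition, in each width $w$ the restricted complex $\mathbf{Q}_{\bullet}(w)$ is a minimal free resolution of the $\A(w)$-module $\M(w)$, which is precisely the statement of (i).

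For the implication (i) $\Rightarrow$ (ii), condition (i) says that for every width $w$ the complex $\mathbf{Q}_{\bullet}(w)$ is a minimal \emph{free} resolution of $\M(w)$. In particular, for each homological degree $i$ and each width $w$, the module $\mathbf{Q}_i(w)$ is a free $\A(w)$-module. Since each $\mathbf{Q}_i$ is finitely generated and graded, \Cref{flatfree} then implies that $\mathbf{Q}_i$ is $\A$-flat. The minimality of the maps in the width-wise complexes is part of (i) and matches the definition of width-wise minimality. Finally, the exactness of $\mathbf{Q}_{\bullet} \to \M \to 0$ as a sequence of $\A$-modules is part of the standing hypothesis of the proposition (and is also guaranteed widthwise by (i), since exactness of sequences of $\OI$-modules is checked pointwise). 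Combining these observations yields (ii).

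There is no real obstacle here; the proof is a short bookkeeping argument whose only substantive input is \Cref{flatfree}. I would keep the write-up brief, explicitly citing \Cref{flatfree} for the free-to-flat passage and referring back to the definition of width-wise minimal flat resolution for the remaining content.
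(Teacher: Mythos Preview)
Your proposal is correct and matches the paper's own proof, which is equally brief: the paper simply says that \Cref{flatfree} gives (i) $\Rightarrow$ (ii) and that the converse is clear. Your write-up just unpacks the definitions a bit more explicitly, but the argument is the same.
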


\begin{proof}
\Cref{flatfree} shows that (i) implies (ii). The converse is clear. 
\end{proof}

In the next section, we present constructions of finitely generated
width-wise minimal flat resolutions of certain classes of ideals.


\section{Constructing resolutions}\label{resolutions}

Using cellular resolutions, we explicitly construct classes of graded
minimal free resolutions and width-wise minimal flat resolutions.

First examples of such resolutions can be obtained from a Koszul
complex introduced in \cite[Lemma 8.3]{1710.09247}.  Given any
\(\OI\)-algebra \(\A\) and any width one element \(a \in \A(1)\),
there is a complex of free \(\A\)-modules
\[
  \cdots \rightarrow \bF^{\OI,d}_{\A}
  \xrightarrow{\varphi_{d}} \bF^{\OI,d-1}_{\A}
  \rightarrow \cdots \rightarrow \bF^{\OI,1}_{\A}
  \xrightarrow{\varphi_{1}} \bF^{\OI,0}_{\A} = \A \rightarrow
  0, 
\]
which in each width \(w\) is the classical Koszul complex on the set
of images of \(a\) in \(\A(w)\). In fact, if \(\A\) is graded and
\(a\) is homogeneous, the above complex is a graded complex. To state
this precisely, we need some notation.

For an integer \(k\) and a graded \(\OI\)-module \(\M\), we denote by
\(\M [k]\) the module with the same module structure as \(\M\), but
with an (internal) grading given by
\([(\M [k])(n)]_j = [\M(n)]_{j + k}\) for any integer \(j\).

\begin{example}
    \label{exa:Koszul}
Let $\Ib$ be the ideal of $\A =  \mathbf{X}^{\OI,1}$ generated by $x_1$. Then the Koszul complex associated to $x_1$ gives a graded and exact sequence (see \cite[Proposition 8.4]{1710.09247}), 
\[
\cdots \rightarrow \bF^{\OI,d}_{\A}[-d]
  \xrightarrow{\varphi_{d}} \bF^{\OI,d-1}_{\A}[-d+1]
  \rightarrow \cdots \rightarrow \bF^{\OI,1}_{\A}[-1]
  \xrightarrow{\varphi_{1}} \Ib\rightarrow
  0. 
\]
In each width $w$, it minimally resolves $\Ib (w) = (x_1,\ldots,x_w)$. Thus, it is a width-wise minimal graded free resolution of the ideal $\Ib$. 
\end{example}

In \cite{FN}, we generalize the construction of the mentioned Koszul
complex to the case where \(\A\) is an arbitrary commutative \(\OI\)
algebra, replacing the choice of \(a \in \A(1)\) with a choice of a
sequence \(a_{1}, \ldots, a_{r}\) of \(r\) elements in \(\A\) of
arbitrary widths, where this sequence is encoded via an \(\A\)-module
map \(\bF \rightarrow \A\) for \(\bF\) is a free \(\A\)-module of rank
\(r\) generated in the appropriate widths. In particular, if \(\A\) is
a polynomial OI-algebra over a field \(k\), then the constructed
complex gives a width-wise minimal graded free resolution of
\(\mathbf{k}\).

\begin{example}\label{exa:gen Koszul} 
  Consider the standard-graded polynomial OI-algebra
  \(\A =
  \operatorname{Sym}_{\bullet}\left(\bF^{\OI,2}\right)\). Thus,
  using the notation of \cref{oi}, one has
  \[
    \A(w) = k\left[x_{ij} ~|~ 1 \leqslant i < j \leqslant w \right].
  \]
  Let \(\mathfrak{m}\) be the ideal generated by \(x_{12} \in
  \A(2)\). Thus, \(\mathfrak{m}(w)\) is the graded maximal ideal of
  \(\A(w)\) for any width \(w \ge 2\). In this case, the functorially
  constructed Koszul complex in \cite{FN} gives a width-wise minimal
  graded free resolution of \(\mathfrak{m}\). Note that since \(\A\)
  is not noetherian, finitely generated free resolutions of \(\A\)-modules 
  are in general not guaranteed to exist.
  
\end{example}

We now construct explicit resolutions of classes of ideals in a
noetherian polynomial \(\OI\)-algebra. To this end we use the theory
of cellular resolutions for monomial ideals in noetherian polynomial
rings. We introduce suitable notation and recall needed concepts.

\begin{defn}\label{basiccombinatorics}
  Let \(P\) be a partially ordered set. For any \(a,b\) in \(P\), the \textit{interval} from \(a\) to \(b\), denoted \([a,b]\), is the set
  \[
  [a,b] = \{x \in P ~|~ a \leqslant x \leqslant b\}.
  \]

  An \textit{order ideal} in \(P\) is a nonempty subset
  \(I \subseteq P\) such that for any \(x, y\) in \(P\), if
  \(x \leqslant y\) and \(y \in I\), then \(x \in I\). (Some authors
  refer to this as a \textit{lower set} in \(P\).)
\end{defn} 

Often we will identify a monomial with its exponent vector and make
use of the following partially ordered sets.

\begin{defn}\label{specificcombinatorics}
  Fix positive integers \(d, n, m_{1}, \ldots, m_{d}\) with
  \(d \leqslant n\), and
  \(m_{1}\leqslant m_{2}\leqslant \ldots \leqslant m_{d}\), and
  consider the sets
\begin{align*}
\mathcal{P} & = \left\{(i_{1}, i_{2}, \ldots, i_{d})~|~ 1 \leqslant i_{1} <
      i_{2} < \ldots < i_{d} \leqslant n \right\} \subset [n]^d,  \\
\mathcal{Q} & = \left\{(i_{1}, i_{2}, \ldots, i_{d})~|~ 1 \leqslant i_{1}
      \leqslant i_{2} \leqslant \ldots \leqslant i_{d} \leqslant n
    \right\} \subset [n]^d,  \\
\mathcal{R} & = \left\{(i_{1}, i_{2}, \ldots, i_{d}) ~|~ i_{1} < i_{2} <
      \ldots < i_{d} ~\text{and}~1 \leqslant i_{j} \leqslant m_{j}
      ~\text{for each}~ j\right\}    \subset [m_1] \times \cdots \times [m_d]
\end{align*}
together with their componentwise (or Gale) partial ordering, defined by
  \[
  (i_{1}, \ldots, i_{d}) \leqslant_{\mathrm{Gale}} (j_{1}, \ldots,
    j_{d}) \quad \text{if}~i_{k} \leqslant j_{k} ~\text{for}~ k= 1,\ldots,d. 
    \]
 \end{defn}

 Note that there is a natural bijection between \(\mathcal{P}\) and
 the squarefree monomials of degree \(d\) in a polynomial ring
 \(R = k[x_{1}, x_{2},\ldots, x_{n}]\), given by
 $(i_{1}, \ldots, i_{d}) \mapsto x_{i_{1}}\cdots x_{i_{d}}$.
 Similarly, there is a bijection between \(\mathcal{Q}\) and the set
 of all monomials of degree \(d\) in \(R\).  Using these
 correspondences, we define the monomial ideals we are interested in,
 following \cite[Definitions 3.4 and 3.6]{MR2515766}.

\begin{defn}
  Let \(I\) be a monomial ideal generated by monomials of degree
  \(d\).

  (i) If \(I\) is in \(R\), it is called \textit{squarefree strongly
    stable} if its monomial generating set corresponds to an order
  ideal in \(\mathcal{P}\). It is said to be \textit{strongly} stable
  if its monomial generating set corresponds to an order ideal in
  \(\mathcal{Q}\).
  
  (ii) If \(I\) is generated by monomials in the set
  \(M = \{x_{1,i_{1}}x_{2,i_{2}}\cdots
  x_{d,i_{d}}~|~(i_{1},i_{2},\ldots, i_{d}) \in \mathcal{R}\}\), then
  we say that \(I\) is a \textit{Ferrers ideal} if its monomial
  generating set corresponds to an order ideal in \(\mathcal{R}.\)
\end{defn}

For simplicity, the following theorem in \cite{MR2515766} is stated for a
  squarefree strongly stable ideal. It is based on earlier work in \cite{MR2438922, MR2457403} and 
  applies analogously to strongly
  stable and to Ferrers ideals as well. For information on cellular resolutions we refer to \cite{MS}.

\begin{thm}[{\cite[Theorem 3.13]{MR2515766}}]
            \label{complexofboxes}
Consider a  squarefree strongly stable ideal \(I \subseteq R = k[x_{1}, \ldots, x_{n}]\) that is generated in degree \(d\). 
Let \(\Delta = \Delta^{n-1} \times \Delta^{n-1} \times \ldots \times
\Delta^{n-1}\) be the polyhedral cell complex obtained by taking the
product of \(d\) simplices on the set \([n]\).  Denote by
\(\mathcal{V}_{I}\) the set of vertices of \(\Delta\) that correspond
to the monomial generators of \(I,\)
\[
\mathcal{V}_{I} = \left\{ \{i_{1}\} \times \{i_{2}\} \times \cdots
      \times \{i_{d}\} ~\big|~ x_{i_{1}}x_{i_{2}}\cdots x_{i_{d}} \in
      I ~\text{and}~ i_{1} < \ldots < i_{d}\right\},
  \]
  
  Let \(C_{I}\) be the vertex-induced subcomplex of \(\Delta\)
  determined by the set \(\mathcal{V}_{I}\). Then \(C_{I}\) supports a
  linear minimal free resolution \(B_{\bullet}\) of \(R/I\), called
  the \emph{complex-of-boxes resolution},
  \[
B_{\bullet} \colon \quad 0 \to   B_{\delta}[-d+\delta -1]  \rightarrow \cdots \rightarrow B_{2} [-d-1] \rightarrow B_{1}[-d] \rightarrow B_{0} = R \to R/I \to 0, 
  \]
  where \(\delta\) is the dimension of \(C_{I}\), \(B_j\) is the free
  \(R\)-module with basis
  \(\{e_P \; \mid \; P \in C_I, \dim (P) = j-1\}\) and the
  differential is defined by
  \[
    \partial(e_{P}) = \sum\limits_{Q} \sgn
    (P,Q)\frac{m_{P}}{m_{Q}}e_{Q}.
    \]
    This sum runs over the codimension one faces \(Q\) of \(P\),
    \(\sgn (P,Q)\) is an incidence function determined by an
    orientation on \(\Delta\), and \(m_{P} \in R\) is the least common
    multiple of the monomials corresponding to the vertices of \(P\).
 \end{thm}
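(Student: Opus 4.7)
The plan is to apply the Bayer--Sturmfels criterion for cellular resolutions (see \cite{MS}) to the labeled polyhedral complex $C_I$. First I would verify that $C_I$ is a genuine polyhedral subcomplex of $\Delta$: its faces are products $\sigma_1 \times \cdots \times \sigma_d$ with $\sigma_i \subseteq [n]$ such that every vertex $\{i_1\} \times \cdots \times \{i_d\}$ with $i_1 < \cdots < i_d$ contained in the product belongs to $\mathcal{V}_I$. The squarefree strongly stable hypothesis is exactly what is needed to ensure that this collection is closed under passing to subfaces, so that $C_I$ forms a bona fide cell complex. Next I would introduce the labeling $P \mapsto m_P$ as the least common multiple of the labels of the vertices of $P$, and verify that the proposed differential has monomial coefficients, since $Q \subseteq P$ forces $m_Q$ to divide $m_P$.

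The core of the argument is exactness. By the Bayer--Sturmfels criterion, $B_\bullet$ is a free resolution of $R/I$ if and only if for every monomial $m \in R$ the subcomplex
\[
(C_I)_{\preceq m} = \{ P \in C_I \; | \; m_P \text{ divides } m\}
\]
is empty or contractible. I expect this contractibility step to be the principal obstacle. The approach is to construct an explicit deformation retract exploiting squarefree strong stability: given a face $P = \sigma_1 \times \cdots \times \sigma_d$ in $(C_I)_{\preceq m}$, one canonically enlarges $\sigma_i$ by adjoining the smallest index $j$ such that the resulting face still lies in $(C_I)_{\preceq m}$, using strong stability to ensure that this enlargement does not leave $C_I$. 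Iterating this enlargement coordinate by coordinate collapses $(C_I)_{\preceq m}$ onto a single distinguished face, which gives the required contractibility.

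Finally, I would check minimality and linearity. Minimality reduces to showing that for every codimension-one inclusion $Q \subset P$ in $C_I$ the quotient $m_P / m_Q$ is a single variable, and in particular never a unit. This follows because passing from $P$ to $Q$ removes a single index from exactly one of the factors $\sigma_j$, and squarefree strong stability guarantees that the removed index contributes nontrivially to the lcm. The same bookkeeping yields $\deg m_P = d + \dim P$ for every $P \in C_I$, so each $B_j$ is generated in a single internal degree, matching the grading shifts in the statement and giving linearity. The parallel statements for strongly stable ideals and for Ferrers ideals are obtained by the same outline, with the poset $\mathcal{P}$ replaced by $\mathcal{Q}$ and $\mathcal{R}$ respectively.
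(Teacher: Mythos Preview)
The paper does not give its own proof of this statement: it is quoted verbatim from \cite[Theorem 3.13]{MR2515766} and used as a black box in the proof of \Cref{sss}. So there is no ``paper's proof'' to compare against here; the authors simply cite the result and move on to an illustrative example.

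That said, your outline is essentially the strategy of the original Nagel--Reiner argument: reduce to the Bayer--Sturmfels acyclicity criterion, then show that each sublevel complex \((C_I)_{\preceq m}\) is contractible, and finally read off minimality and linearity from the fact that \(m_P/m_Q\) is a single variable for every codimension-one pair. Your observations that a face \(\sigma_1\times\cdots\times\sigma_d\) of \(C_I\) must satisfy \(\max\sigma_j<\min\sigma_{j+1}\), and hence \(m_P=\prod_j\prod_{i\in\sigma_j}x_i\) with \(\deg m_P=d+\dim P\), are exactly what drives the minimality and linearity claims, so that part is fine.

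The one place where your sketch is thinner than the original is the contractibility step. In \cite{MR2515766} this is not done by an ad hoc ``enlarge \(\sigma_i\) by the smallest available index'' retraction; rather, one identifies \((C_I)_{\preceq m}\) with an order complex (or a product of such) associated to an interval in the relevant poset and invokes shellability/vertex-decomposability of that poset to get contractibility. Your proposed coordinate-by-coordinate collapse is plausible but would need care to check that the enlarged face remains in \((C_I)_{\preceq m}\) at every stage and that the retraction is continuous on the polyhedral complex, not merely a map of face posets. If you want a self-contained proof, that is the step to write out carefully.
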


 Before applying this result to the study of \(\OI\)-ideals, we
 illustrate it by an example.

\begin{example}
  \label{cob}
  Consider the squarefree strongly stable ideal
  \(I = (x_{1}x_{2}, x_{1}x_{3}, x_{1}x_{4}, x_{2}x_{3}, x_{2}x_{4})
  \subseteq k[x_{1}, x_{2}, x_{3}, x_{4}]\).  \cref{complexofboxes}
  shows that \(R/I\) has a minimal free resolution supported on the
  polyhedral cell complex shown below:
  \[
    \begin{tikzpicture}[scale=6]
      \node (12) at (0,0) [label=below:\(1\times 2\)] {}; \node (13)
      at (1,0) [label=below:\(1\times 3\)] {}; \node (23) at (2,0)
      [label=below:\(2\times 3\)] {}; \node (14) at (0.5,0.866)
      [label=above:\(1\times 4\)] {}; \node (24) at (1.5,0.866)
      [label=above: \(2\times 4\)] {};
        
      \draw [black, fill=black!15] (0,0) -- (1,0) -- (0.5,0.866) --
      (0,0); \draw [black, fill=black!25] (1,0) -- (2,0) -- (1.5,
      0.866) -- (0.5, 0.866) -- (1,0);
        
      \draw (12.center) -- (13.center) node [midway,
      label=below:\(1\times 23\)] {}; \draw (13.center) -- (23.center)
      node [midway, label=below:\(12\times 3\)] {}; \draw (12.center)
      -- (14.center) node [midway, label=left:\(1\times 24\)] {};
      \draw (13.center) -- (14.center) node [midway,
      label=right:\(1\times 34\)] {}; \draw (23.center) -- (24.center)
      node [midway, label=right:\(2\times 34\)] {}; \draw (14.center)
      -- (24.center) node [midway, label=above:\(12\times 4\)] {};
        
      \node (t) at (0.5, 0.433) {\(1\times 234\)}; \node (p) at (1.25,
      0.433) {\(12\times 34\)};
    \end{tikzpicture}
  \]
  It has five vertices, six edges and two 2-dimensional faces.
  Explicitly, the complex-of-boxes resolution determined by this cell
  complex is
  \[
    0 \to R(-4)^{2} \xrightarrow{\varphi_{1}} R(-3)^{6}
    \xrightarrow{\varphi_{1}} R(-2)^{5} \xrightarrow{\varphi_{0}}
    R^{1} \to R/I \to 0,
  \]
  where the entries of the differentials with the stated bases have
  the following coordinate matrices:
  \begin{align*}
    \varphi_{0} & = \begin{bmatrix}
      x_{1}x_{2} & x_{1}x_{3} & x_{2}x_{3} & x_{1}x_{4} & x_{2}x_{4}
    \end{bmatrix}, \\
    \varphi_{1} & = \begin{bmatrix}
      x_{3}  & 0      & -x_{4} & 0      & 0      & 0      \\
      -x_{2} & -x_{2} & 0      & -x_{4} & 0      & 0      \\
      0      & x_{1}  & 0      & 0      & -x_{4} & 0      \\
      0      & 0      & x_{2}  & x_{3}  & 0      & -x_{2} \\
      0      & 0      & 0      & 0      & x_{3}  & x_{2}  \\
    \end{bmatrix}, \\
    \varphi_{2} & = \begin{bmatrix}
      x_{4}  & 0 \\
      0      & x_{4} \\
      x_{3}  & 0 \\
      -x_{2} & -x_{2} \\
      0      & x_{1} \\
      0      & -x_{3} \\
    \end{bmatrix}.          
  \end{align*}
\end{example}

The next step is to consider an \(\OI\)-ideal that is generated in one
width by a strongly stable monomial ideal. As preparation, we
establish the following combinatorial results.

\begin{lem}\label{technicallemma}
  Fix an integer \( d \geqslant 1\). For any integer
  \(w \geqslant d\), let \(\mathcal{P}_{w}\) be the set of
  strictly-increasing \(d\)-tuples of integers from \(1\) to \(w\),
  i.e.
\[ 
  \mathcal{P}_{w} = \left\{ \vec{a} = (a_{1}, a_{2},\ldots , a_{d})
    ~\big{|}~ 1 \leqslant a_{1} < a_{2} < \ldots < a_{d} \leqslant w
  \right\}.
\]
Each set \(\mathcal{P}_{w}\) is a poset under the componentwise
partial ordering as discussed in \cref{basiccombinatorics}. Any
\(\OI\)-morphism \(\eps : w \rightarrow w+1\) induces a map
\(\mathcal{P}_{w} \rightarrow \mathcal{P}_{w+1}\) by componentwise
application: \(\eps(\vec{a}) = (\eps(a_{1}), \ldots, \eps(a_{d}))\).

If \(I_{w}\) is an order ideal in \(\mathcal{P}_{w}\), then the set
\[
  I_{w+1} = \left\{\eps(\vec{a}) ~\big{|}~ \vec{a} \in
    I_{w}~\text{and}~\eps\in\operatorname{Hom}_{\OI}(w,w+1)\right\}
\]
is an order ideal in \(\mathcal{P}_{w+1}.\)
  
\end{lem}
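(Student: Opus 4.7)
The plan is to show directly that \(I_{w+1}\) is a nonempty down-set in \(\mathcal{P}_{w+1}\). Nonemptiness is immediate from the nonemptiness of \(I_w\), so the content is the downward closure. I pick \(\vec{c} \in I_{w+1}\) and \(\vec{c}\,' \in \mathcal{P}_{w+1}\) with \(\vec{c}\,' \leqslant_{\mathrm{Gale}} \vec{c}\), and aim to exhibit \(\vec{c}\,'\) as \(\eps'(\vec{a}\,')\) for some \(\vec{a}\,' \in I_w\) and some \(\eps' \in \operatorname{Hom}_{\OI}(w, w+1)\). By definition of \(I_{w+1}\) I may write \(\vec{c} = \eps(\vec{a})\) with \(\vec{a} \in I_w\) and \(\eps \colon w \to w+1\).

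Since \(d \leqslant w < w+1\), the set \(\{1,\ldots,w+1\} \setminus \{c'_1,\ldots,c'_d\}\) is nonempty; let \(k\) be its minimum, let \(\eps' \colon w \to w+1\) be the unique order-preserving injection whose image omits \(k\), and set \(\vec{a}\,' := (\eps')^{-1}(\vec{c}\,')\), which lies in \(\mathcal{P}_w\) by construction. It then suffices to verify \(\vec{a}\,' \leqslant_{\mathrm{Gale}} \vec{a}\), since then the order-ideal property of \(I_w\) yields \(\vec{a}\,' \in I_w\) and hence \(\vec{c}\,' = \eps'(\vec{a}\,') \in I_{w+1}\).

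The heart of the argument is this dominance check, and this is exactly what forces \(k\) to be chosen minimally. By the minimality of \(k\) combined with the strict monotonicity of \(\vec{c}\,'\) one has \(c'_i = i\) for \(i < k\) and \(c'_i > k\) for \(i \geqslant k\), and consequently \(a'_i = i\) for \(i < k\) while \(a'_i = c'_i - 1\) for \(i \geqslant k\). In the first range \(a'_i = i \leqslant a_i\) because \(\vec{a}\) is strictly increasing. In the second range, using that any order-preserving injection \(w \to w+1\) shifts each value by at most one (so \(c_i = \eps(a_i) \leqslant a_i + 1\)), one gets \(a'_i = c'_i - 1 \leqslant c_i - 1 \leqslant a_i\). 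The main potential pitfall here is precisely the choice of \(k\): other selections from the complement can fail to produce an \(\vec{a}\,'\) dominated by \(\vec{a}\), so the verification only goes through cleanly with this minimum.
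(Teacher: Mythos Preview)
Your proof is correct and follows essentially the same approach as the paper: both arguments pick the minimal element \(k\) of \([w+1]\setminus\{c'_1,\ldots,c'_d\}\), take \(\eps'\) to be the injection omitting \(k\), and verify that the resulting \(\vec{a}\,'=(\eps')^{-1}(\vec{c}\,')\) is componentwise below \(\vec{a}\) using \(a'_i=i\) for \(i<k\) and \(a'_i=c'_i-1\leqslant c_i-1\leqslant a_i\) for \(i\geqslant k\). The only cosmetic difference is that the paper first rewrites \(I_{w+1}\) as the union of intervals \([\widehat{0},\vec{m}_i+\mathbf{1}]\) over the maximal elements \(\vec{m}_i\) of \(I_w\) and then checks the reverse inclusion, whereas you verify downward closure directly for an arbitrary \(\vec{a}\in I_w\); your framing is slightly more streamlined but the key construction is identical.
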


\begin{proof}
  The minimum element of \(\mathcal{P}_{w}\) and of
  \(\mathcal{P}_{w+1}\) is \(\{1,2,\ldots,d\}\), which we denote by
  \(\widehat{0}\). Let \(\vec{m}_{1}, \ldots, \vec{m}_{r}\) be the
  maximal elements of \(I_w\). Denote by \(\1\) the all ones vector
  \((1,\ldots,1) \in \Z^d\). Then \(I_{w+1}\) is contained in the
  union of the intervals \([\widehat{0},\vec{m}_i + \1]\) with
  \(1 \leqslant i \leqslant r\). In order to show equality, consider
  any \(\vec{b} = (b_1,\ldots,b_d)\) in \(I_{w+1}\).  Thus, there is
  some maximal element \(\vec{m} = (m_1,\ldots,m_d)\) of \(I_w\) with
  \(\vec{b} \leqslant \vec{m} + \1\). We are done if can show that
  there is some \(\vec{a} = (a_1,\ldots,a_d) \leqslant \vec{m}\) with
  \(\vec{b} = \eps (\vec{a})\) for some
  \(\eps\in\operatorname{Hom}_{\OI}(w,w+1)\).

  If \(\vec{b} = \widehat{0}\), then \(a = \widehat{0}\) and the
  identity map have the desired property. If
  \(\vec{b} > \widehat{0}\), there is a unique integer
  \(k \geqslant 1\) such that \(b_i = i\) if \(i < k\) and
  \(b_k > k\). Set \(\vec{a} = (1,2,\ldots,k-1,b_k
  -1,\ldots,b_d-1)\). Note that \(\vec{a}\) is in \(\mathcal{P}_w\)
  and that \(\vec{a} \leqslant \vec{m}\) because
  \(\vec{b} \leqslant \vec{m} + \1\). Moreover, for
  \(\eps \in \operatorname{Hom}_{\OI}(w,w+1)\) with
\[
  \eps(i) = \begin{cases}
    i & \text{ if } 1 \leqslant i < b_k \\
    i+1 & \text{ if } b_k \leqslant i \leqslant w,
  \end{cases}
\]
we get \(\vec{b} = \eps (\vec{a})\), which completes the argument.
\end{proof}

\begin{cor}\label{cor:generated by strongly stable ideal}
  Let \(I \subseteq k[x_{1}, \ldots, x_{w_{0}}]\) be a squarefree
  strongly stable monomial ideal generated in degree \(d\). Consider
  the ideal \(\Ib\) of
  \(\A = \operatorname{Sym}_{\bullet}(\bF^{\OI,1})\) that is generated
  in width $w_0$ by \(I\). Then, for each width \(w \geqslant w_0\),
  the monomial ideal \(\Ib(w)\) is a squarefree strongly stable.
\end{cor}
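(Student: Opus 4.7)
The plan is to reduce the claim to \Cref{technicallemma} via induction on \(w - w_0\). Under the bijection between squarefree monomials of degree \(d\) in a polynomial ring and strictly increasing \(d\)-tuples described in \cref{specificcombinatorics}, denote by \(I_w \subseteq \mathcal{P}_w\) the subset corresponding to the monomial generators of \(\Ib(w)\), so that \(\Ib(w)\) is squarefree strongly stable if and only if \(I_w\) is an order ideal of \(\mathcal{P}_w\). By hypothesis, \(I_{w_0}\) is an order ideal.

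The key observation is that, since \(\Ib\) is the \(\A\)-submodule of \(\A\) generated by \(I\) in width \(w_0\), a monomial \(x_{b_1} \cdots x_{b_d} \in \A(w)\) with \(b_1 < \cdots < b_d\) lies in \(\Ib(w)\) if and only if it is divisible by some \(x_{\eps(a_1)} \cdots x_{\eps(a_d)}\) with \(x_{a_1} \cdots x_{a_d}\) a generator of \(\Ib(w')\) for some \(w' \leqslant w\) and \(\eps \in \operatorname{Hom}_{\OI}(w', w)\). Iterating through the successor morphisms \(w_0 \to w_0 + 1 \to \cdots \to w\), it suffices to show the inductive step: if \(I_w\) is an order ideal of \(\mathcal{P}_w\), then \(I_{w+1}\) is an order ideal of \(\mathcal{P}_{w+1}\).

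For the inductive step, observe that \(\Ib(w+1)\) is generated by the union of \(\Ib(w+1) \cap \A(w+1) \cdot \Ib(w)\)-contributions from the maps \(w \to w+1\). Concretely, the set of strictly increasing \(d\)-tuples \(\vec{b} \in \mathcal{P}_{w+1}\) corresponding to divisibility by some generator of \(\Ib(w+1)\) is exactly
\[
  \left\{\eps(\vec{a}) ~|~ \vec{a} \in I_w, ~\eps \in \operatorname{Hom}_{\OI}(w, w+1)\right\},
\]
together with all componentwise upper elements within \(\mathcal{P}_{w+1}\); but \Cref{technicallemma} applied to the order ideal \(I_w\) shows that this generating set is already an order ideal in \(\mathcal{P}_{w+1}\), which is exactly what it means for \(\Ib(w+1)\) to be squarefree strongly stable.

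The main obstacle is making the bookkeeping step in the second paragraph precise: one must verify that \(I_w\), defined as the indexing set for the minimal monomial generators of \(\Ib(w)\), really coincides with the set produced by applying all \(\OI\)-morphisms \(w_0 \to w\) to \(I_{w_0}\) and then taking the componentwise-minimal elements. This follows from the description of submodule generation in a polynomial \(\OI\)-algebra together with the fact that strong stability of \(I\) is inherited under these componentwise operations, but it is the step that requires care.
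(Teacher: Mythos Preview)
Your approach is essentially the same as the paper's: induction on the width, using \Cref{technicallemma} for the inductive step. The paper's proof is just a cleaner execution of exactly this idea. The bookkeeping obstacle you flag in your last paragraph is handled in the paper by citing \cite[Lemma 2.3]{1710.09247}, which says precisely that \(\Ib(w+1)\) coincides with the width-\((w+1)\) component of the ideal generated in width \(w\) by \(\Ib(w)\); since all generators are squarefree of degree \(d\), the minimal generating set of \(\Ib(w+1)\) is then exactly \(\{\eps(\vec{a}) : \vec{a}\in I_w,\ \eps\in\operatorname{Hom}_{\OI}(w,w+1)\}\), and \Cref{technicallemma} applies directly.

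Two expository points to clean up. First, the clause ``together with all componentwise upper elements within \(\mathcal{P}_{w+1}\)'' is spurious: since every generator is squarefree of degree \(d\), a degree-\(d\) squarefree monomial lies in \(\Ib(w+1)\) if and only if it \emph{is} a generator, so no upper elements need to be adjoined (and ``upper'' is the wrong direction in any case for an order ideal). Second, the phrase ``\(\Ib(w+1)\cap\A(w+1)\cdot\Ib(w)\)-contributions'' is garbled; what you want is simply that \(\Ib(w+1)\) is generated as an \(\A(w+1)\)-ideal by the images \(\A(\eps)(g)\) of the generators \(g\) of \(\Ib(w)\) under all \(\eps\in\operatorname{Hom}_{\OI}(w,w+1)\), which is the content of the cited lemma.
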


\begin{proof}
  We use induction on \(w\). If \(w = w_0\), the claim is true by
  assumption on \(\Ib_{w_0} = I\). Consider any width \(w > w_0\). By
  \cite[Lemma 2.3]{1710.09247}, the ideal \(\Ib_w\) is equal to the
  width \(w\) component of the ideal of \(\A\) that is generated in
  width \(w-1\) by \(\Ib_{w-1}\). By induction, \(\Ib_{w-1}\) is
  squarefree strongly stable. Thus, \cref{technicallemma} shows that
  \(\Ib_w\) is a squarefree strongly stable monomial ideal as well.
\end{proof}

Our main result in this section shows that the complex-of-boxes
resolutions for each width-wise component of an ideal as above can be
given an \(\OI\)-structure. As a consequence, this produces a
width-wise minimal flat resolution of the ideal.

\begin{thm}
  \label{sss}
  Let \(\Ib\) be an \(\OI\)-ideal in
  \(\A= \operatorname{Sym}_{\bullet}(\bF^{\OI,1})\) generated in width
  \(w_0\) by a squarefree strongly stable monomial ideal
  \(I \subseteq \A(w_{0}) = k[x_{1}, \ldots, x_{w_{0}}]\) whose
  generators all have degree \(d\). Then \(\A/\Ib\) has a graded
  width-wise minimal flat resolution \(\mathbf{B}_{\bullet}\),
  \[
    \cdots \to  \mathbf{B}_i[-d-i+1] \to \B_{i-1}[-d-i+2]  \to \cdots \to \B_1[-d] \to \B_0 = \A \to \A/\Ib \to 0, 
  \]
  which in every width \(w \geqslant w_0\) restricts to the graded
  minimal free resolution of \(\A(w)/\Ib(w)\) given by the
  complex-of-boxes. 
\end{thm}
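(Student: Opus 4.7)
The plan is to assemble the complex-of-boxes resolutions from \Cref{complexofboxes}, one for each width $w \geqslant w_0$, into a single $\OI$-complex $\B_\bullet$. By \Cref{cor:generated by strongly stable ideal}, each $\Ib(w)$ is squarefree strongly stable, so \Cref{complexofboxes} already supplies a graded minimal free resolution of $\A(w)/\Ib(w)$ in each such width. The substantive task is to equip the family $\{\B_i(w)\}_w$ with an $\OI$-module structure, upgrade each boundary map to a natural transformation, and then invoke \Cref{flatfree} together with \Cref{prop:char widthwise minimal}.

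For the $\OI$-structure, I would let $\B_i(w)$ be the free $\A(w)$-module with basis $\{e_P\}$ indexed by the $(i-1)$-dimensional faces $P = S_1 \times \cdots \times S_d$ of $C_{\Ib(w)}$ (setting $\B_0(w) = \A(w)$, and $\B_i(w) = 0$ for $w < w_0$ and $i \geqslant 1$). For each $\eps \colon w \to w'$ in $\OI$, define $\B_i(\eps)$ on basis elements by $e_P \mapsto e_{\eps(P)}$, where $\eps(P) := \eps(S_1) \times \cdots \times \eps(S_d)$, and let it act on coefficients via $\A(\eps)$. Because $\eps$ is order-preserving and injective, the vertices of $\eps(P)$ are precisely the images under $\eps$ of the vertices of $P$, and each such image corresponds to a squarefree monomial in $\Ib(w')$ since $\Ib$ is an $\OI$-ideal; hence $\eps(P)$ is a face of $C_{\Ib(w')}$ of the same dimension as $P$. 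Functoriality in $\eps$ is immediate from the componentwise definition.

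The key verification is that the complex-of-boxes differentials assemble into natural transformations, i.e., $\partial_i^{w'} \circ \B_i(\eps) = \B_{i-1}(\eps) \circ \partial_i^w$ for every $\eps \colon w \to w'$. Three ingredients enter. First, since $\eps$ is injective on each $S_j$, it induces a bijection between the codimension-one subfaces $Q$ of $P$ and the codimension-one subfaces of $\eps(P)$, all of the form $\eps(Q)$. Second, the LCM monomial satisfies $\A(\eps)(m_P) = m_{\eps(P)}$ (from the identification of vertex sets above), so $\A(\eps)(m_P/m_Q) = m_{\eps(P)}/m_{\eps(Q)}$. Third, one must arrange $\sgn(P,Q) = \sgn(\eps(P),\eps(Q))$ for the orientation signs, and I expect this to be the principal technical obstacle. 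It can be handled by fixing a coherent family of orientations on the complexes $C_{\Ib(w)}$, for instance by orienting each face via the lexicographic ordering on its vertices induced from the ordering on $[w]$; since $\OI$-morphisms are order-preserving, this orientation is preserved across widths.

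With $\B_\bullet$ so constructed, exactness in every width is supplied by \Cref{complexofboxes}, so $\B_\bullet$ is an exact $\OI$-complex augmenting to $\A/\Ib$. Each $\B_i(w)$ is free over $\A(w)$, so \Cref{flatfree} implies that each $\B_i$ is $\A$-flat. Finally, since $\B_\bullet(w)$ is a minimal free resolution of $\A(w)/\Ib(w)$ for every $w \geqslant w_0$ (and trivially so for $w < w_0$), \Cref{prop:char widthwise minimal} yields that $\B_\bullet$ is a width-wise minimal flat resolution of $\A/\Ib$, with the stated internal degree shifts inherited width-by-width from \Cref{complexofboxes}.
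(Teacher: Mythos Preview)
Your proposal is correct and follows essentially the same route as the paper: assemble the width-wise complex-of-boxes resolutions, define the $\OI$-action on faces by componentwise application of $\eps$, verify that the differentials are natural, and conclude via \Cref{flatfree} and \Cref{prop:char widthwise minimal}. The only substantive difference is in how the sign compatibility is handled: where you argue abstractly that lexicographic orientations are preserved by order-preserving injections, the paper writes down the differential explicitly as the tensor-product Koszul formula $\partial(\sigma_1\otimes\cdots\otimes\sigma_d)=\sum_i(-1)^{\sum_{j<i}\dim\sigma_j}\,\sigma_1\otimes\cdots\otimes\partial\sigma_i\otimes\cdots\otimes\sigma_d$ and checks commutativity directly---which is precisely the formula your lexicographic choice produces.
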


\begin{proof}
  By \cref{cor:generated by strongly stable ideal}, in each width
  \(w \geqslant w_0\), the ideal \(\Ib(w)\) is a squarefree strongly
  stable monomial ideal. Thus, the complex-of-boxes in
  \cref{complexofboxes} gives a graded minimal free resolution of
  \(\A(w)/\Ib (w)\). We use these resolutions to define the desired
  flat \(\A\)-modules \(\B_i\).

  Let \(\B_0 = \A\). In order to define \(\B_i\) for \(i > 0\), we
  first specify it as an \(\A(w)\)-module in each width \(w\) and then
  describe the structure maps between components of different widths.

  Consider any integer \(i > 0\). If \(0 \le w < w_0\), the ideal
  \(\Ib (w)\) is zero, and we set \(\B_i (w) = 0\).  If
  \(w \geqslant w_0\), we define \(\B_i (w)\) as the the free
  \(\A(w)\)-module appearing in homological degree \(i\) of the
  complex-of-boxes resolution of \(\A/\Ib (w)\) as described in
  \cref{complexofboxes}. In particular, it has a basis consisting of
  elements \(e_P\), where \(P\) is any \((i-1)\)-dimensional face of
  the polyhedral cell complex \(C_{\Ib (w)}\). Thus \(P\) is of the
  form
  \(P = \sigma_{1} \otimes \cdots \otimes \sigma_{d} \subset
  \Delta^{w-1} \times \cdots \times \Delta^{w-1}\), where
  \(\Delta^{w-1}\) is the simplex on the vertex set \([w]\). In the
  remainder of the proof we use
  \(\sigma_{1} \otimes \cdots \otimes \sigma_{d}\) to denote the basis
  element \(e_P\).
  
  Given an \(\OI\) morphism \(\eps : w \rightarrow w'\) and an element
  of the form
  \(a \cdot (\sigma_{1} \otimes \cdots \otimes \sigma_{d})\) with
  \(a \in \A(w)\), we define a homomorphism of \(k\)-modules
  \(\mathbf{B}_{i}(\eps) \colon \mathbf{B}_{i}(w) \rightarrow
  \mathbf{B}_{i}(w')\) by
  \[
  \mathbf{B}_{i}(\eps)
    \left(a \cdot (\sigma_{1} \otimes \cdots \otimes
      \sigma_{d})\right) = \A(\eps)(a) \cdot
    (\eps(\sigma_{1}) \otimes \cdots \otimes
    \eps(\sigma_{d})),
    \]
    i.e., we apply the structure morphism \(\A(\eps)\) to the
    coefficient \(a\), and we apply the \(\OI\) morphism \(\eps\) to
    the elements of the \(\sigma_{j}\). One verifies that these maps
    give \(\mathbf{B}_{i}\) the structure of an
    \(\A\)-module. Moreover, \(\B_i\) is \(\A\)-flat by
    \cref{flatfree}.

    Next, we define the desired differential \(\B_i \to \B_{i-1}\) in
    any width \(w\) as the differential in the complex-of-boxes
    resolution of \(\A(w)/\Ib (w)\). In order to show that these
    width-wise assignments give a morphism of \(\A\)-modules, it
    suffices to check (see \cite[Lemma 2.3]{1710.09247}) that, for any
    \(\eps \in \operatorname{Hom}_{\OI}(w,w+1)\), the following square
    commutes:
    \[
      \begin{tikzcd}
        \mathbf{B}_{i}(w) \arrow[r, "\partial"] \arrow[d, "\mathbf{B}_{i}(\eps)"'] & \mathbf{B}_{i-1}(w) \arrow[d, "\mathbf{B}_{i-1}(\eps)"]\\
        \mathbf{B}_{i}(w+1) \arrow[r, "\partial"] &
        \mathbf{B}_{i-1}(w+1).
      \end{tikzcd}
    \]
    To this end we claim there are orientations of the polyhedral cell
    complexes used in \cref{complexofboxes} such that the resulting
    differentials are determined by
 \begin{equation}
      \label{eq:sign on product}
      \partial (\sigma_{1} \otimes \ldots \otimes \sigma_{d}) = \sum
      \limits_{i = 1}^{d} (-1)^{\sum\limits_{j = 1}^{i-1}
        \mathrm{dim}(\sigma_{j})} \sigma_{1} \otimes \ldots \otimes
      \partial(\sigma_{i}) \otimes \ldots  \otimes \sigma_{d},  
 \end{equation}
 where \(\partial\) applied to a simplex
 \(\sigma = \{i_{0}, i_{1}, \ldots, i_{j}\}\) is defined by
\begin{equation}
    \label{eq:sign on simplex}
    \partial(\sigma) =  \sum \limits_{k = 0}^{j} (-1)^{j} x_{i_{k}} \cdot (\sigma \setminus i_{k}).     
  \end{equation}
  This can be verified directly. More conceptually, this follows
  because \eqref{eq:sign on simplex} corresponds to a choice of an
  orientation on the simplicial complex generated by a
  \((w-1)\)-dimensional simplex so that the resulting cellular
  resolution gives a Koszul complex 
  \[
    0 \to K_w \to \cdots \to K_1 \to K_0 \to 0.  
  \]
  Thus, \eqref{eq:sign on product} is induced from the differential of
  the cellular resolution supported on the polyhedral cell complex
  \(\Delta^{w-1} \times \ldots \times \Delta^{w-1}\) with \(d\)
  factors. The latter corresponds to the \(d\)-fold tensor product of
  the truncated Koszul complex
  \[
    0 \to K_w \to \cdots \to K_1 \to 0.  
  \]
  Note that the mentioned cellular resolution resolves the Ferrers
  ideal (see \cite[Theorem 3.13]{MR2515766})
  \[
    (x_{1,1},\ldots,x_{1,w}) \cdots (x_{d,1},\ldots,x_{d,w}) \subset
    K[x_{i, j} \; \mid \; i \in [d], j \in [w] ] \cong (\A
    (w))^{\otimes d}.
  \]
  
  We now verify the claimed commutativity by checking it for an
  arbitrary \(a \cdot \sigma_{1} \otimes \ldots \otimes \sigma_{d}\)
  with \(a \in \A(w)\) and a basis element
  \(\sigma_{1} \otimes \ldots \otimes \sigma_{d}\) of
  \(\mathbf{B}_{i}(w)\). If \(d = 1\) this is straightforward using
  Formula \eqref{eq:sign on simplex} and is, in fact, part of the
  statement in \Cref{exa:Koszul}. If \(d \geqslant 1\) we compute on the
  one hand
  \begin{align*}
    \hspace{2em}&\hspace{-2em} 
                  \mathbf{B}_{i-1}(\eps) \circ \partial ( a \cdot \sigma_{1}\otimes \sigma_{2} \otimes \cdots \otimes \sigma_{d}) 
    \\
                & =  \mathbf{B}_{i-1}(\eps) \left( \sum \limits_{j = 1}^{d} (-1)^{\sum\limits_{k = 1}^{j-1}\dim (\sigma_{k})} a \cdot  \sigma_{1} \otimes \cdots \otimes \partial(\sigma_{j})  \otimes \cdots \otimes \sigma_{d}\right) \\
                &= \sum \limits_{j=1}^{d} (-1)^{\sum \limits_{k = 1}^{j-1}\text{deg}(\sigma_{k})}  \A(\eps)(a) \cdot  \eps(\sigma_{1})\otimes \cdots \otimes \eps(\partial(\sigma_{j})) \otimes \cdots \otimes \eps (\sigma_{d}).  
  \end{align*}    
  On the other hand we get 
  \begin{align*}
    \hspace{2em}&\hspace{-2em} 
                  \partial \circ \mathbf{B}_{i}(\eps)(a \cdot \sigma_{1}\otimes \cdots \otimes \sigma_{d}) \\
                & = \partial \big (\A(\eps)(a) \cdot  \eps(\sigma_{1}) \otimes \cdots \otimes \eps(\sigma_{d}) \big )\\ 
                & = \sum \limits_{j=1}^{d} (-1)^{\sum \limits_{k = 1}^{j-1}\dim (\eps(\sigma_{k}))} \A(\eps)(a) \cdot \eps(\sigma_{1})\otimes \cdots \otimes \partial(\eps(\sigma_{j})) \otimes \cdots \otimes \eps (\sigma_{d}). 
  \end{align*}  
  Since \(\eps (\partial (\sigma_j)) = \eps (\partial (\sigma_j))\) by
  the argument for the case \(d=1\), we conclude
  \[
    \mathbf{B}_{i-1}(\eps) \circ \partial ( a \cdot \sigma_{1}\otimes
    \sigma_{2} \otimes \cdots \otimes \sigma_{d}) = \partial \circ
    \mathbf{B}_{i}(\eps)(a \cdot \sigma_{1}\otimes \cdots \otimes
    \sigma_{d}),
  \]
  as desired. Thus, \(\mathbf{B}_{\bullet}\) is a complex
  \(\A\)-modules. It is an exact sequence \(\mathbf{B}_{\bullet}\) of
  flat \(\A\)-modules because because it is width-wise exact by
  construction. This completes the proof.
\end{proof}

We single out a special case where the above flat resolution is in
fact a free resolution.

\begin{cor}\label{cor:minimal free resolution}
  If \(\Ib\) is the ideal of
  \(\A= \operatorname{Sym}_{\bullet}(\bF^{\OI,1})\) generated in width
  \(d\) by the monomial \(x_{1}x_{2}\cdots x_{d}\), then \(\A/\Ib\)
  has a width-wise minimal graded free resolution of the form
  \[
    \cdots \to \mathbf{B}_i[-d-i+1] \to \B_{i-1}[-d-i+2] \to \cdots
    \to \B_1[-d] \to \B_0 = \A \to \A/\Ib \to 0,
  \]
  where
  \(\B_i = \big( \bF^{\OI,d+i-1}_{\A} \big )^{\binom{d+i-2}{d-1}}\).
\end{cor}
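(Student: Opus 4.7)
The plan is to apply \Cref{sss} directly and then show that in this specific case, the flat \(\A\)-modules \(\B_i\) appearing in the complex-of-boxes resolution are in fact free of the claimed rank. First, observe that the ideal \(\Ib\) satisfies the hypotheses of \Cref{sss}: it is generated in width \(d\) by the single squarefree monomial \(x_1 x_2 \cdots x_d\), which generates a (trivially) squarefree strongly stable ideal in degree \(d\). For each width \(w \geqslant d\), the ideal \(\Ib(w)\) is precisely the squarefree monomial ideal generated by all \(x_{i_1} \cdots x_{i_d}\) with \(1 \leqslant i_1 < \cdots < i_d \leqslant w\). Thus \Cref{sss} provides a graded width-wise minimal flat resolution \(\B_\bullet\) of \(\A/\Ib\), and it remains to identify each \(\B_i\) as the stated free \(\A\)-module.

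Next, I would analyze the \((i-1)\)-dimensional faces of the polyhedral cell complex \(C_{\Ib(w)}\). A face \(\sigma_1 \otimes \cdots \otimes \sigma_d\) of \(\Delta^{w-1} \times \cdots \times \Delta^{w-1}\) lies in \(C_{\Ib(w)}\) if and only if every vertex \((j_1,\ldots,j_d) \in \sigma_1 \times \cdots \times \sigma_d\) corresponds to a generator of \(\Ib(w)\), which is equivalent to requiring \(\max \sigma_k < \min \sigma_{k+1}\) for each \(k\). The dimension condition \(\sum_k \dim(\sigma_k) = i-1\) translates to \(\sum_k |\sigma_k| = d+i-1\). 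The key combinatorial observation is that each such face is determined uniquely by two pieces of data: (a) the \emph{composition} \((|\sigma_1|,\ldots,|\sigma_d|)\) of \(d+i-1\) into \(d\) positive parts, of which there are \(\binom{d+i-2}{d-1}\); and (b) the unique order-preserving injection \(\eps \colon [d+i-1] \hookrightarrow [w]\) whose image is \(\sigma_1 \cup \cdots \cup \sigma_d\).

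This correspondence identifies the \(\A(w)\)-basis of \(\B_i(w)\) with pairs \((c, \eps)\), where \(c\) runs over compositions of \(d+i-1\) into \(d\) positive parts and \(\eps \in \operatorname{Hom}_\OI(d+i-1, w)\). Moreover, by construction in the proof of \Cref{sss}, the structure map \(\B_i(\eps')\) for an \(\OI\)-morphism \(\eps' \colon w \to w'\) sends the basis element indexed by \((c, \eps)\) to the one indexed by \((c, \eps' \circ \eps)\), which is exactly the structure map on \(\bigl(\bF^{\OI,d+i-1}_\A\bigr)^{\binom{d+i-2}{d-1}}\), with the \(\binom{d+i-2}{d-1}\) summands indexed by compositions. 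Hence \(\B_i \cong \bigl(\bF^{\OI,d+i-1}_\A\bigr)^{\binom{d+i-2}{d-1}}\) as \(\A\)-modules. The internal degree shift \([-d-i+1]\) is determined by the least common multiple of the vertices of a standard face, which is a squarefree monomial of degree \(d+i-1\), agreeing with the shift convention already used in \Cref{exa:Koszul}.

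Finally, since \(\B_\bullet\) is a width-wise minimal flat resolution and each \(\B_i\) has been shown to be free, it is automatically a width-wise minimal graded free resolution. The main obstacle is the combinatorial identification in the second paragraph; once established, the verification that the \(\OI\)-structure on \(\B_i\) coincides with that of the free module (rather than merely matching rank in each width via \Cref{detbyrank}) is routine but should be done explicitly to justify the freeness as \(\A\)-modules.
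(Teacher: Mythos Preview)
Your proposal is correct and follows essentially the same approach as the paper: both apply \Cref{sss}, then identify the \((i-1)\)-faces of \(C_{\Ib(w)}\) with pairs consisting of an \(\OI\)-morphism \([d+i-1]\to[w]\) (the support \(\sigma_1\cup\cdots\cup\sigma_d\)) and a composition of \(d+i-1\) into \(d\) positive parts (the cardinalities \(|\sigma_j|\)). Your write-up is in fact slightly more thorough than the paper's, since you explicitly verify that the \(\OI\)-structure maps on \(\B_i\) agree with those of \(\bigl(\bF^{\OI,d+i-1}_{\A}\bigr)^{\binom{d+i-2}{d-1}}\), whereas the paper leaves this implicit.
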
 

\begin{proof} 
  We use the flat resolution of \(\A/\Ib\) described in
  \Cref{sss}. For any width \(w\) and any homological
  degree\(i \geqslant 1\), a basis of \(\B_i (w)\) is given by an
  \((i-1)\)-dimensional face
  \(P = \sigma_{1} \otimes \cdots \otimes \sigma_{d}\) of
  \(C_{\Ib (w)}\).  Thus \(Q = \sigma_1 \cup \cdots \cup \sigma_d\) is
  a subset \(\{k_1 < k_2 < \cdots < k_{d+i-1}\}\) of \([w]\) with
  \(\sigma_j = \left\{k_l ~\big{|}~ |\sigma_1| + \cdots +
    |\sigma_{j-1}| < l \leqslant |\sigma_1| + \cdots + |\sigma_{j}|
  \right\}\). Hence \(P\) is determined by first choosing a
  \((d+i-1)\)-element subset \(Q\) of \([w]\). Each such choice can be
  identified with an \(\OI\)-morphism \(\pi \colon [d+i-1] \to [w]\)
  whose image is \(Q\). The set of all such maps \(\pi\) index a basis
  of \(\bF^{\OI,d+i-1}_{\A} (w)\).

  Second, given such a choice of \(Q\), a face
  \(P = \sigma_{1} \otimes \cdots \otimes \sigma_{d}\) with
  \(Q = \sigma_1 \cup \cdots \cup \sigma_d\) is determined by choosing
  the cardinalities of \(\sigma_1,\ldots,\sigma_{d-1}\) as
  \(|\sigma_1| + \cdots + |\sigma_{d}| = d+i-1\). Since every
  \(\sigma_j\) is non-empty, one must have
  \(1 \leqslant |\sigma_j| \leqslant i\). Thus, given \(Q\), there are
  \(\binom{d+i-2}{d-1}\) choices for the cardinalities of
  \(\sigma_1,\ldots,\sigma_{d-1}\). In width \((d + i - 1)\), these
  choices index a basis for \(\mathbf{B}_{i}\) as a free
  \(\A\)-module.
\end{proof}

We illustrate the above results by an explicit example.

\begin{example}
  Let \(\A = \operatorname{Sym}_{\bullet}(\bF^{\OI,1})\) be the
  polynomial OI-algebra with one variable of width \(1\). Consider the
  ideal \(\Ib\) of \(\A\) that is generated in width two by
  \(x_{1}x_{2}\) and the ideal \(\mathbf{J}\) of \(\A\) that generated
  in width three by \((x_{1}x_{2}, x_{1}x_{3})\).  The
  complex-of-boxes construction gives a resolution of \(\Ib\) by free
  \(\A\)-modules which is both minimal and width-wise minimal by
  \Cref{cor:minimal free resolution}.  It gives a width-wise minimal
  flat resolution of \(\mathbf{J}\) that is not a free resolution. The
  cell complexes supporting these resolutions for widths \(2\) through
  \(5\) are shown in the table below.
  
  The \(\OI\)-structure on the family of complex-of-boxes resolutions
  in each width corresponds to an \(OI\)-structure on their supporting
  polyhedral cell complexes supporting the resolution of \(\Ib (w)\)
  as a subcomplex of the cell complex supporting the resolution of
  \(\Ib (w+1)\). In the example, we can identify the two-dimensional
  cell complex supporting the free resolution of \(\Ib(4)\) as a
  subcomplex of the three-dimensional cell complex for \(\Ib(5)\) in
  five different ways, corresponding to the five \(\OI\) morphisms
  from \(4\) to \(5\). See \cref{tab:subcpx}.
  \begin{table}[h]
    \centering
    \begin{tabular}[h]{|c||c|c|}
      \hline
      width & \(\Ib\) & \(\mathbf{J}\) \\
      \hline
      \(w = 2\) &  \begin{tikzpicture}[scale=.8,baseline=(current bounding box.center)]
        
        \coordinate (up) at (0,1);
        \coordinate[label = below:$x_{1}x_{2}$] (12) at (0,0);
        \coordinate (down) at (0,-1);
        \node at (up) {};
        \node at (down) {};
        \node at (12)[circle,fill,inner sep=1pt]{};
      \end{tikzpicture} & \(\varnothing\) \\
      \hline 
      \(w = 3\) & \begin{tikzpicture}[scale=.8,baseline=(current bounding box.center)]
        \coordinate (up) at (0,1);
        \coordinate[label = below:$x_{1}x_{2}$] (12) at (-2.5,0);
        \coordinate[label = below:$x_{1}x_{3}$] (13) at (0,0);
        \coordinate[label = below:$x_{2}x_{3}$] (23) at (2.5,0);
        \coordinate (down) at (0,-1);
        
        \node at (up) {};
        \node at (down) {};
        \node at (12)[circle,fill,inner sep=1pt]{};
        \node at (13)[circle,fill,inner sep=1pt]{};
        \node at (23)[circle,fill,inner sep=1pt]{};
        
        \draw (12) -- (13) -- (23);
      \end{tikzpicture}  & \begin{tikzpicture}[scale=.8,baseline=(current bounding box.center)]
        
        \coordinate (up) at (0,1);
        \coordinate[label = below:$x_{1}x_{2}$] (12) at (-2.5,0);
        \coordinate[label = below:$x_{1}x_{3}$] (13) at (0,0);
        \coordinate[label = below:\color{gray}{$x_{2}x_{3}$}] (23) at (2.5,0);
        \coordinate (down) at (0,-1);
        
        \node at (up) {};
        \node at (down) {};
        \node at (12)[circle,fill,inner sep=1pt]{};
        \node at (13)[circle,fill,inner sep=1pt]{};
        \node at (23)[circle,fill=gray,inner sep=1pt]{};
        
        \draw (12) -- (13);
        \path[draw,densely dotted,color=gray] (13) -- (23);
      \end{tikzpicture}\\
      \hline
      \(w = 4\) & \begin{tikzpicture}[scale=.8,baseline=(current bounding box.center)]
        
        \coordinate (up) at (0,5.3301);
        \coordinate[label = below:$x_{1}x_{2}$] (12) at (-2.5,0);
        \coordinate[label = below:$x_{1}x_{3}$] (13) at (0,0);
        \coordinate[label = below:$x_{2}x_{3}$] (23) at (2.5,0);
        \coordinate[label = left:$x_{1}x_{4}$] (14) at (-1.25,2.165);
        \coordinate[label = right:$x_{2}x_{4}$] (24) at (1.25,2.165);
        \coordinate[label = above:$x_{3}x_{4}$] (34) at (0,4.3301);
        \coordinate (down) at (0,-1);
        
        \node at (up) {};
        \node at (down) {};
        \node at (12)[circle,fill,inner sep=1pt]{};
        \node at (13)[circle,fill,inner sep=1pt]{};
        \node at (23)[circle,fill,inner sep=1pt]{};
        \node at (14)[circle,fill,inner sep=1pt]{};
        \node at (24)[circle,fill,inner sep=1pt]{};
        \node at (34)[circle,fill,inner sep=1pt]{};
        
        \draw (12) -- (13) -- (14) -- (12);
        \filldraw[opacity=0.15] (12) -- (13) -- (14) -- (12);
        
        \draw (13) -- (23) -- (24)-- (14)-- (13);
        \filldraw[opacity=0.15] (13) -- (23) -- (24)-- (14)-- (13);
        
        \draw (14) -- (24) -- (34) -- (14);
        \filldraw[opacity=0.15] (14) -- (24) -- (34) -- (14);
        
      \end{tikzpicture}  & \begin{tikzpicture}[scale=.8,baseline=(current bounding box.center)]
        
        \coordinate (up) at (0,5.3301);
        \coordinate[label = below:$x_{1}x_{2}$] (12) at (-2.5,0);
        \coordinate[label = below:$x_{1}x_{3}$] (13) at (0,0);
        \coordinate[label = below:$x_{2}x_{3}$] (23) at (2.5,0);
        \coordinate[label = left:$x_{1}x_{4}$] (14) at (-1.25,2.165);
        \coordinate[label = right:$x_{2}x_{4}$] (24) at (1.25,2.165);
        \coordinate[label = above:\color{gray}{$x_{3}x_{4}$}] (34) at (0,4.3301);
        \coordinate (down) at (0,-1);
        
        \node at (up) {};
        \node at (down) {};
        \node at (12)[circle,fill,inner sep=1pt]{};
        \node at (13)[circle,fill,inner sep=1pt]{};
        \node at (23)[circle,fill,inner sep=1pt]{};
        \node at (14)[circle,fill,inner sep=1pt]{};
        \node at (24)[circle,fill,inner sep=1pt]{};
        \node at (34)[circle,fill=gray,inner sep=1pt]{};
        
        \draw (12) -- (13) -- (14) -- (12);
        \filldraw[opacity=0.15] (12) -- (13) -- (14) -- (12);
        
        \draw (13) -- (23) -- (24)-- (14)-- (13);
        \filldraw[opacity=0.15] (13) -- (23) -- (24)-- (14)-- (13);
        
        \path[draw,densely dotted,color=gray] (14) -- (34);
        \path[draw,densely dotted,color=gray] (24) -- (34);
      \end{tikzpicture} \\
      \hline
      \raisebox{3cm}{\(w = 5\)} & \includegraphics[width=7cm]{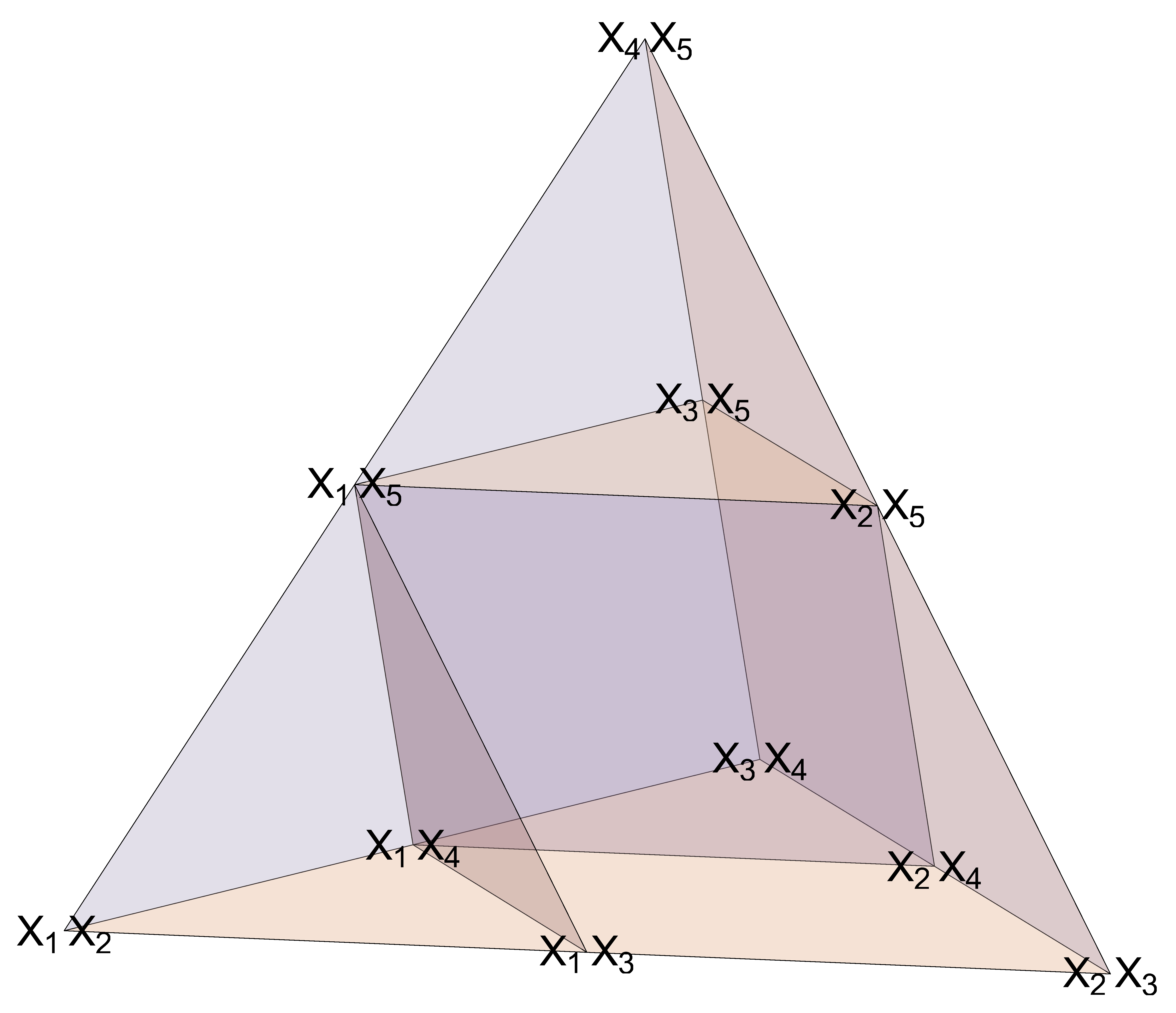} & \includegraphics[width=7cm]{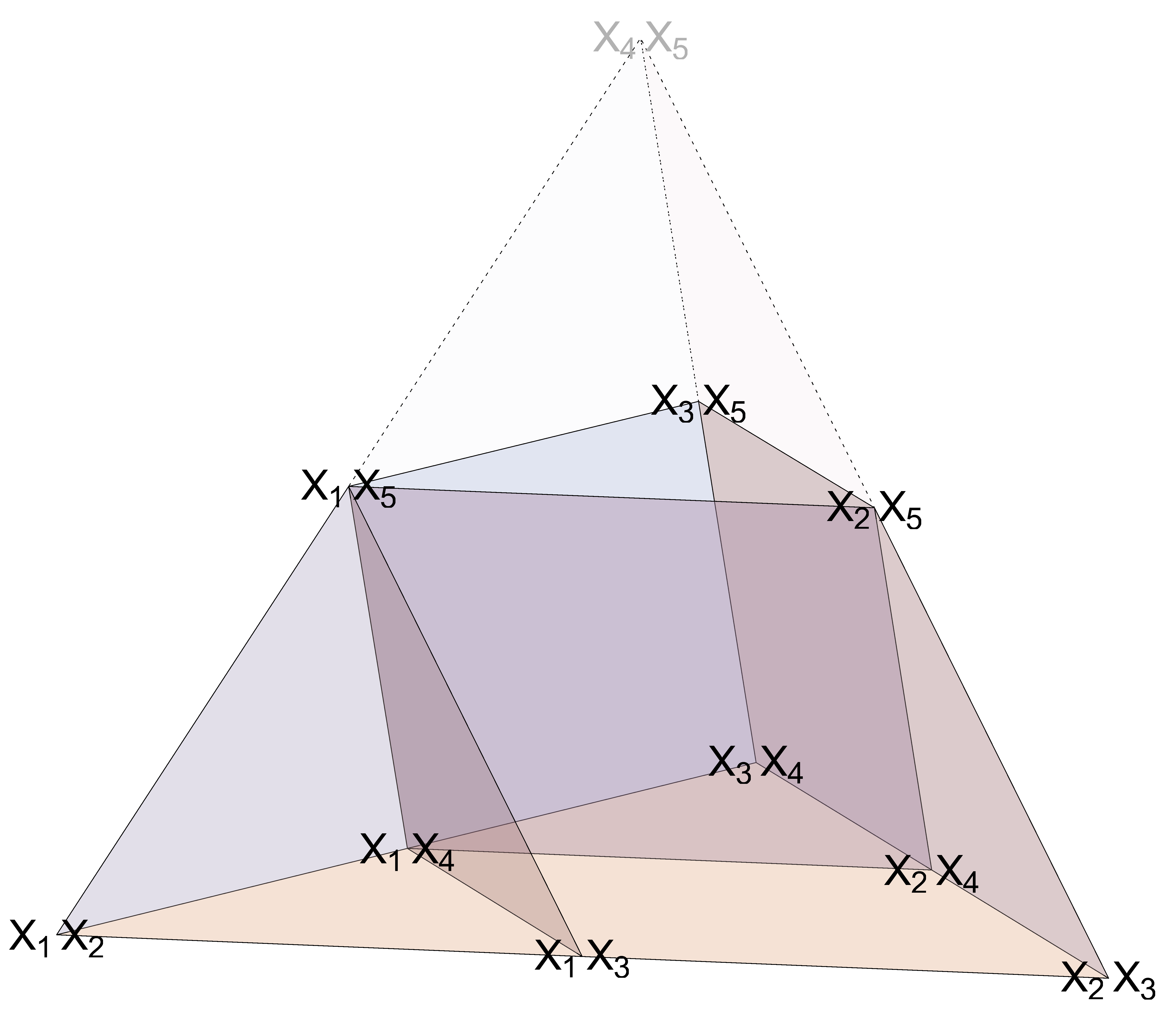}\\
      \hline
    \end{tabular}
    \caption{Simplicial complexes supporting width-wise minimal resolutions.}
    \label{tab:subcpx}
  \end{table}
\end{example}

As mentioned above, each squarefree strongly stable monomial ideal has
a corresponding Ferrers ideal, which also admits a linear minimal free
resolution supported on an identical cell complex. In fact, \cref{sss}
can be adapted to provide an analogous result for families of Ferrers
ideals.

\begin{thm}\label{ferrers}
  Let \(\A = \operatorname{Sym}_{\bullet}(\bF^{\OI,1})^{\otimes d}\)
  be the polynomial OI-algebra with \(d\) variables of width \(1\),
  that is, for every \(w\),
  \[
    \A(w) = k[x_{i,j} ~|~ 1 \leqslant i \leqslant d ~\text{and}~ 1
    \leqslant j \leqslant w]
  \]
  and \(\OI\) morphisms act on the second index of the variables. Let
  \(I \subseteq \A (w_0)\) be any Ferrers ideal generated in degree
  \(d\), and let \(\Ib\) be the ideal in \(\A\) which is generated in
  width \(w_0\) by \(I.\)

  Then in each width \(w \geqslant w_0\), the monomial ideal
  \(\Ib(w)\) is a Ferrers ideal and \(\A/\Ib\) has a graded width-wise
  minimal flat resolution \(\mathbf{B}_{\bullet}\),
  \[
    \cdots \to  \mathbf{B}_i[-d-i+1] \to \B_{i-1}[-d-i+2]  \to \cdots \to \B_1[-d] \to \B_0 = \A \to \A/\Ib \to 0, 
  \]
  which in every width \(w \geqslant w_0\) restricts to the graded
  minimal free resolution of \(\A(w)/\Ib(w)\) given by the
  complex-of-boxes.
\end{thm}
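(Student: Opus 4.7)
The plan is to imitate the proof of \Cref{sss} step by step, replacing the squarefree strongly stable combinatorics with Ferrers combinatorics. The first task is a Ferrers analog of \Cref{technicallemma} and \Cref{cor:generated by strongly stable ideal}. Fix the row bounds $m_1 \leqslant \cdots \leqslant m_d$ and recall that the generators of a Ferrers ideal correspond to an order ideal in the poset $\mathcal{R}$ from \Cref{specificcombinatorics}. An $\OI$-morphism $\eps \colon w \to w+1$ acts on a variable $x_{i,j}$ by $x_{i,j} \mapsto x_{i,\eps(j)}$, so on the level of exponent vectors $(i_1,\ldots,i_d)$ it acts componentwise. I would show by a direct lifting argument (identical in spirit to \Cref{technicallemma}): given $(b_1,\ldots,b_d) \in \Ib(w+1)$ with $b_1 < \cdots < b_d$, locate the smallest index $k$ where $b_k$ is not forced, pull each $b_\ell$ for $\ell \geqslant k$ back by $-1$, and check that the resulting tuple is still in $\mathcal{R}_w$ and lies below some maximal generator. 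Iterating on width, this gives that $\Ib(w)$ is a Ferrers ideal for every $w \geqslant w_0$.

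Once this structural fact is in place, the construction of $\B_\bullet$ proceeds verbatim as in \Cref{sss}. In each width $w$ the complex-of-boxes resolution of \Cref{complexofboxes} applies (as explicitly noted there, it works for Ferrers ideals too), giving a free $\A(w)$-module $\B_i(w)$ with basis indexed by the $(i-1)$-dimensional faces $P = \sigma_1 \otimes \cdots \otimes \sigma_d$ of the subcomplex $C_{\Ib(w)}$ of $\Delta^{w-1}\times\cdots\times\Delta^{w-1}$. For an $\OI$-morphism $\eps \colon w \to w'$, define
\[
\mathbf{B}_i(\eps)\bigl(a \cdot (\sigma_1 \otimes \cdots \otimes \sigma_d)\bigr) = \A(\eps)(a) \cdot \bigl(\eps(\sigma_1) \otimes \cdots \otimes \eps(\sigma_d)\bigr).
\]
Because $\eps$ sends generators of $\Ib(w)$ into generators of $\Ib(w')$ (by the combinatorial step above), this is well-defined, and functoriality is immediate from componentwise composition.

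The verification that the complex-of-boxes differentials from \Cref{sss}, assembled via the same sign conventions \eqref{eq:sign on product} and \eqref{eq:sign on simplex}, commute with the maps $\B_i(\eps)$ is formally identical to the computation in the proof of \Cref{sss}: one reduces by tensor-product of Koszul complexes to the case $d=1$, where the claim is already part of \Cref{exa:Koszul}, and then propagates the sign computation across the $d$ tensor factors. Flatness of each $\B_i$ follows from \Cref{flatfree}, since by construction $\B_i(w)$ is a free $\A(w)$-module, and width-wise exactness is built in by \Cref{complexofboxes}. By \Cref{prop:char widthwise minimal}, $\mathbf{B}_\bullet$ is then a width-wise minimal flat resolution.

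The main obstacle is the first step, the Ferrers analog of \Cref{technicallemma}, because the row-bound conditions $i_j \leqslant m_j$ must be preserved under the lifting of a tuple from $\mathcal{R}_{w+1}$ to $\mathcal{R}_w$. The fact that the bounds $m_j$ are independent of $w$ (only the ambient width changes, not the Ferrers shape that cuts out $\mathcal{R}$) makes the pullback automatic, but it has to be checked carefully to ensure that the pulled-back tuple still satisfies $a_j \leqslant m_j$ for all $j$; this works because decreasing an $i_j$ can only relax the constraint. After that, the cellular and sign arguments are mechanical and transfer from \Cref{sss} without modification.
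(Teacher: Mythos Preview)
Your proposal is correct and matches the paper's own argument, which simply states that the proof proceeds exactly as in \Cref{sss} with only cosmetic changes and leaves the details to the reader. Your outline in fact supplies those details; note that since the paper's poset \(\mathcal{R}\) already imposes \(i_1 < i_2 < \cdots < i_d\), the combinatorial step is literally the same statement as \Cref{technicallemma} applied to the second indices, so the row-bound issue you flag as the main obstacle is even milder than you suggest.
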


\begin{proof}
  The proof proceeds exactly as in the proof of \cref{sss} with only
  cosmetic changes. We leave the details to the interested reader. 
\end{proof} 

\begin{rk}
  It is worth noting that in \cite{MR2515766}, all three classes of
  monomial ideals mentioned in \cref{specificcombinatorics} admit
  complex-of-boxes resolutions: squarefree strongly stable monomial
  ideals, strongly stable monomial ideals, and Ferrers
  ideals. \cref{sss} and \cref{ferrers} show that, for
  \(\OI\)-paremetrized families of strongly stable ideals and Ferrers
  ideals, the complex-of-boxes resolutions can be given an
  \(\OI\)-structure. The analogous result is not true for strongly
  stable monomial ideals because in this case the analog of
  \Cref{cor:generated by strongly stable ideal} is not true, i.e., the
  widthwise components of an \(\OI\)-ideal generated by a strongly
  stable ideal in a fixed width are not necessarily strongly stable
  monomial ideals themselves.
  
  For example, if \(\Ib\) is the ideal of
  \(\A = \operatorname{Sym}_{\bullet}(\bF^{\OI,1})\) that is generated
  in width one by the strongly stable monomial ideal \((x_{1}^{3})\),
  then in width \(2\) we have
  \[
    \Ib(2) = (x_{1}^{3}, x_{2}^{3}).
  \]
  This ideal is not strongly stable since it is missing
  \(x_{1}^{2}x_{2}\) and \(x_{1}x_{2}^{2}\).
\end{rk}


 
\end{document}